\newtheorem{thm}{Theorem}[section]
\newtheorem{lem}[thm]{Lemma}
\newtheorem{cor}[thm]{Corollary}
\newtheorem{prp}[thm]{Proposition}
\newtheorem{ex}[thm]{Example}
\newcommand{\conv}{{\mathrm{conv}}\,}
\newcommand{\aff}{{\mathrm{aff}}\,}
\newcommand{\ee}{\varepsilon}
\newcommand{\R}{\mathbb{R}}
\newcommand{\N}{\mathbb{N}}
\newcommand{\cK}	{\mathcal K}
\newcommand{\cZ}	{\mathcal Z}
\newcommand{\Sphere}[1][n-1]	{S^{#1}}
\begin{document}
\bigskip

\title{A CHARACTERIZATION OF BLASCHKE ADDITION}
\author[Richard J.~Gardner, Lukas Parapatits, and Franz E.~Schuster]
{Richard J.~Gardner, Lukas Parapatits, and Franz E.~Schuster}
\address{Department of Mathematics, Western Washington University,
Bellingham, WA 98225-9063,USA} \email{Richard.Gardner@wwu.edu}
\address{Institute of Discrete Mathematics and Geometry, Vienna University of Technology, Wiedner Hauptstrasse 8--10/104, 1040 Vienna, Austria}
\email{lukas.parapatits@tuwien.ac.at}
\address{Institute of Discrete Mathematics and Geometry, Vienna University of Technology, Wiedner Hauptstrasse 8--10/104, 1040 Vienna, Austria} \email{franz.schuster@tuwien.ac.at}
\thanks{First author supported in
part by U.S.~National Science Foundation Grant DMS-1103612. Second author supported by Austrian Science Fund (FWF) Grant P23639-N18. Third author supported by FWF Grant P22388-N13. 
Second and third authors also supported by European Research Council (ERC) Grant 306445.}
\subjclass[2010]{Primary: 52A20, 52A30; secondary: 52A39, 52A41} \keywords{compact convex set, zonoid, Brunn-Minkowski theory, Minkowski addition, Blaschke addition, L\'{e}vy-Prokhorov metric} \maketitle
\begin{abstract}
A characterization of Blaschke addition as a map between origin-symmetric convex bodies is established.  This results from a new characterization of Minkowski addition as a map between origin-symmetric zonoids, combined with the use of L\'{e}vy-Prokhorov metrics.  A full set of examples is provided that show the results are in a sense the best possible.
\end{abstract}

\section{Introduction}

Like so much else in convex geometry, the operation between convex bodies now called Blaschke addition goes back to Minkowski \cite[p.~117]{Min97}, at least when the bodies are polytopes.  Given convex polytopes $K$ and $L$ in $\R^n$, a new convex polytope $K\,\sharp\,L$, called the Blaschke sum of $K$ and $L$, has a facet with outer unit normal in a given direction if and only if either $K$ or $L$ (or both) do, in which case the area (i.e., $(n-1)$-dimensional volume) of the facet is the sum of the areas of the corresponding facets of $K$ and $L$.  Blaschke \cite[p.~112]{Bla56} found a definition suitable for smooth convex bodies in $\R^3$.  The modern definition, appropriate for any pair of convex bodies, had to wait for the development of surface area measures and is due to Fenchel and Jessen \cite{FenJ38}.  They defined the surface area measure of $K\,\sharp\,L$ to be the sum of the surface area measures of $K$ and $L$, and this determines the Blaschke sum, up to translation. (See (\ref{Bladef}) below.  The existence of $K\,\sharp\,L$ is guaranteed by Minkowski's existence theorem, a classical result that can be found, along with definitions and terminology, in Section~\ref{prelim}.)

After Minkowski addition, with which it coincides, up to translation, in the plane, and perhaps $L_p$ addition (the natural $L_p$ extension of Minkowski addition), Blaschke addition is the most important operation between sets in convex geometry.  Its fundamental nature is evidenced by the fact that every convex polytope is a finite Blaschke sum of simplices and every $o$-symmetric (i.e., symmetric with respect to the origin) convex polytope is a finite Blaschke sum of parallelotopes; see \cite{FirG} or \cite[pp.~334--5]{Gru03}.  In a similar vein, Grinberg and Zhang \cite{GriZ99} showed that each $o$-symmetric convex body is a limit in the Hausdorff metric of finite Blaschke sums of ellipsoids.

Blaschke addition has found many applications in geometry.  It was employed by Petty \cite{Pet67} and Schneider \cite{Sch67} in their independent solutions of Shephard's problem on areas of orthogonal projections (i.e., brightness functions) of centrally symmetric convex bodies.  Indeed, it is a natural tool generally when brightness functions arise, because the brightness function of the Blaschke sum $K\,\sharp\,(-K)$ of a convex body $K$ and its reflection $-K$ in the origin is, up to a constant multiple, equal to that of $K$ itself.  In this context, see, for example, \cite{Gro98}, \cite{How06}, and \cite{Kla10}.  Blaschke sums also appear in the theory of valuations, as in \cite{Hab11}, \cite{Kla00}, \cite{Lud12}, \cite{McM80}, \cite{SchS}, \cite{Schu07b}, and \cite{Schu08}; the theory of random sets and processes, as in \cite{GKW98}, \cite{HugS}, and \cite[p.~200]{Mol05}; Minkowski geometry \cite[Chapter~6]{Tho96}; affine isoperimetric inequalities, in particular, the Kneser-S\"{u}ss inequality \cite[Theorem~7.1.13]{Sch93}; and the study of affine surface area \cite{Lut91}, decomposability \cite{CCG98}, and additive maps of convex bodies \cite{Kid06}, \cite{Schu07}.  This list is by no means comprehensive and further references can be found in \cite[Notes~3.3 and 3.4]{Gar06} and \cite[pp.~395--396]{Sch93}.

The concept of a surface area measure of a convex body is of interest outside the mathematics community.  Computer scientists and electrical engineers, for example, tend to refer to it as the extended Gaussian image.  This is the term used by Zouaki \cite{Zou03a}, \cite{Zou03b} who applies Blaschke addition to object metamorphosis in computer vision (animation) and computer aided design.

Gardner, Hug, and Weil \cite{GHW} initiated a program of research with the goal of characterizing the important operations between sets in geometry.  The aim is to single out such an operation as the only one satisfying a short list of fundamental properties.  Results of this type are not merely a matter of axiomatics, but yield extremely potent geometrical information.  For example, \cite[Corollary~9.11]{GHW} states that an operation $*$ between compact convex sets in $\R^n$, $n\ge 2$, is continuous in the Hausdorff metric, $GL(n)$ covariant, and has the identity property, if and only if it is Minkowski addition.  (Here, $GL(n)$ covariant means that the operation can take place before or after the sets concerned undergo the same transformation in $GL(n)$, with the same effect.   The identity property says that the operation leaves a set unchanged if the other set is the single point at the origin; see Section~\ref{properties} for formal definitions.)  Other results in \cite{GHW} successfully characterize $L_p$ addition.  The methods used in \cite{GHW} require continuity in the Hausdorff metric in the class of (not necessarily full-dimensional) compact convex sets.  Unfortunately, by \cite[Theorem~5.3]{GHW}, when $n\ge 3$, Blaschke addition cannot even be extended to a continuous operation between $o$-symmetric compact convex sets.

It is therefore clear that new techniques must be introduced in order to provide a characterization of Blaschke addition and we do this here. Our main result is as follows.

\smallskip

{\bf Theorem A}. {\em If $n\ge 3$, then an operation between $o$-symmetric convex bodies in $\R^n$ is uniformly continuous in the L\'{e}vy-Prokhorov metric $\delta_{LP}$ and $GL(n)$ covariant if and only if $K*L=aK\,\sharp\,bL$, for some $a,b\ge 0$ and all $o$-symmetric convex bodies $K$ and $L$.}

\smallskip

See Theorem~\ref{thmBlaschke} for a more precise formulation.  In the statement of Theorem~A it should be understood that $K*L=bL$ if $a=0$ and $K*L=aK$ if $b=0$.  A characterization of Blaschke addition (the case $a=b=1$ of Theorem~A) follows quickly when one extra weak property, a suitable modification of the identity property, is added; see Corollary~\ref{characterization Blaschke}. The L\'{e}vy-Prokhorov distance $\delta_{LP}(K,L)$ between convex bodies $K$ and $L$ with centroids at the origin is defined to be the usual L\'{e}vy-Prokhorov distance between their surface area measures.  Formal definitions of L\'{e}vy-Prokhorov metrics and some supplementary material concerning them is provided in Section~\ref{LevProk}.

The underlying idea behind Theorem~A is to employ the well-known connection between Minkowski addition and Blaschke addition via the projection body operator $\Pi$.  If $K$ is a convex body in $\R^n$, its projection body $\Pi K$ has support function equal to the brightness function of $K$.  The connection then takes the form
\begin{equation}\label{pik}
\Pi(K\,\sharp\, L)=\Pi K + \Pi L,
\end{equation}
where $+$ denotes Minkowski addition; see, for example, \cite[p.~183]{Gar06}.  Since projection bodies are just full-dimensional, $o$-symmetric zonoids, (\ref{pik}) suggests that a characterization of Blaschke addition might follow from a characterization of Minkowski addition as a map between $o$-symmetric zonoids.  In this direction our principal result is as follows.

\smallskip

{\bf Theorem B}. {\em If $n\ge 3$, then an operation between $o$-symmetric zonoids in $\R^n$ is continuous in the Hausdorff metric and $GL(n)$ covariant if and only if $K*L=aK+bL$, for some $a,b\ge 0$ and all $o$-symmetric zonoids $K$ and $L$.}

\smallskip

See Theorem~\ref{characterization Minkowski addition zonoids1}.  A new characterization of Minkowski addition (the case $a=b=1$ of Theorem~B) follows when the identity property is added.  It is interesting to note that the assumptions in Theorem~B are considerably weaker than those in the characterization of Minkowski addition as an operation between $o$-symmetric compact convex sets given in \cite[Corollary~10.5]{GHW}; in particular, we do not need the associativity property in Theorem~B.

It turns out that the L\'{e}vy-Prokhorov metric is exactly the right tool to effect the transition from Theorem~B to Theorem~A.  This requires a certain amount of technical details, but this is apparently unavoidable.  Indeed, we provide a full set of examples showing that none of the properties in our main results can be omitted, nor can uniform continuity in Theorem~A be replaced by continuity.

The paper is organized as follows.  After the preliminary Section~\ref{prelim}, the necessary background for the L\'{e}vy-Prokhorov metrics is set out in Section~\ref{LevProk}.  In Section~\ref{properties} we discuss properties of binary operations.  The new characterization of Minkowski addition is the main result in Section~\ref{Minkowski}, and our main goal, the characterization of Blaschke addition, is achieved in Section~\ref{SecBlaschke}.  Throughout, we use the label ``Proposition" for a result that is known, or suspected to be known.

\section{Definitions and preliminaries}\label{prelim}

As usual, $S^{n-1}$ denotes the unit sphere and $o$ the origin in Euclidean
$n$-space $\R^n$.  We shall assume that $n\ge 2$ throughout.  The unit ball in $\R^n$ will be denoted by $B^n$. The
standard orthonormal basis for $\R^n$ will be $\{e_1,\dots,e_n\}$.  Otherwise, we usually denote the coordinates of $x\in \R^n$ by $x_1,\dots,x_n$.  We write
$[x,y]$ for the line segment with endpoints $x$ and $y$. If $x\in \R^n\setminus\{o\}$, then $x^{\perp}$ is the $(n-1)$-dimensional subspace orthogonal to $x$.  (Throughout the paper, the term {\em subspace} means a linear subspace.)

If $X$ is a set,  we denote by $\partial X$, $\aff X$, $\conv X$, and $\dim X$ the {\it boundary}, {\it affine hull}, {\it convex hull}, and {\it dimension} (that is, the dimension of the affine hull) of $X$, respectively.  If $S$ is a subspace of $\R^n$, then $X|S$ is the (orthogonal) projection of $X$ onto $S$ and $x|S$ is the projection of a vector $x\in\R^n$ onto $S$.

If $t\in\R$, then $tX=\{tx:x\in X\}$. When $t>0$, $tX$ is called a {\em dilatate} of $X$.  The set $-X=(-1)X$ is the {\em reflection} of $X$ in the origin.

A {\it body} is a compact set equal to the closure of its interior.

We write ${\mathcal{H}}^k$ for $k$-dimensional Hausdorff measure in $\R^n$,
where $k\in\{1,\dots, n\}$. The notation $dz$ will always
mean $d{\mathcal{H}}^k(z)$ for the appropriate $k=1,\dots, n$.  If $K$ is a compact convex set in $\R^n$, then $V(K)$ denotes its {\em volume}, that is, ${{\mathcal{H}}^k}(K)$, where $\dim K=k$.  We write $\kappa_n=V(B^n)=\pi^{n/2}/\Gamma(n/2+1)$ for the volume of the unit ball $B^n$.

The Grassmannian of $k$-dimensional subspaces in $\R^n$ is denoted by ${\mathcal{G}}(n,k)$.

A set is {\it $o$-symmetric} if it is centrally symmetric, with center at the
origin.  We shall call a set in $\R^n$ {\em $1$-unconditional} if it is symmetric with respect to each coordinate hyperplane; this is traditional in convex geometry for compact convex sets.

Let ${\mathcal K}^n$ be the class of nonempty compact convex subsets of $\R^n$ and let ${\mathcal{K}}_{o}^n$ be the class of convex bodies in $\R^n$, i.e., members of ${\mathcal K}^n$ with nonempty interiors.  (Note that the same notation is used differently in \cite{GHW}.)  The $o$-symmetric sets in ${\mathcal K}^n$ or ${\mathcal K}_{o}^n$ are denoted by ${\mathcal K}_s^n$ or ${\mathcal K}_{os}^n$, respectively.

If $K$ is a nonempty closed (not necessarily bounded) convex set, then
$$
h_K(x)=\sup\{x\cdot y: y\in K\},
$$
for $x\in\R^n$, is its {\it support function}. A nonempty closed convex set is uniquely determined by its support function.   Support functions are {\em homogeneous of degree 1}, that is,
\begin{equation}\label{homog}
h_K(rx)=rh_K(x),
\end{equation}
for all $x\in \R^n$ and $r\ge 0$, and are therefore often regarded as functions on $S^{n-1}$.  They are also {\em subadditive}, i.e.,
\begin{equation}\label{subadd}
h_K(x+y)\le h_K(x)+h_K(y),
\end{equation}
for all $x,y\in \R^n$.  The {\em Hausdorff distance} $\delta(K,L)$ between sets $K,L\in {\mathcal K}^n$
can be conveniently defined by
\begin{equation}\label{HD}
\delta(K,L)=\|h_K-h_L\|_{\infty},
\end{equation}
where $\|\cdot\|_{\infty}$ denotes the $L_{\infty}$ norm on $S^{n-1}$.  (This is equivalent to the alternative definition
$$\delta(K,L)=\max\{\max_{x\in K}d(x,L),\max_{x\in L}d(x,K)\}$$
that applies to arbitrary compact sets, where $d(x,E)$ denotes the distance
from the point $x$ to the set $E$.)  Proofs of these facts can be found in \cite{Sch93}.  Gruber's book \cite{Gru07} is also a good general reference for convex sets.

The {\em polar set} of an arbitrary set $K$ in $\R^n$ is
$$
K^{\circ}=\{x\in \R^n:\,x\cdot y\le 1 {\mathrm{~for~all~}} y\in K\}.$$
See, for example, \cite[p.~99]{Web}.

The vector or Minkowski sum of sets $X$ and $Y$ in $\R^n$ is defined by
$$
X+Y=\{x+y: x\in X, y\in Y\}.
$$
When $K,L\in {\mathcal K}^n$, $K+L$ can be equivalently defined as the compact convex set such that
$$
h_{K+L}(u)=h_K(u)+h_L(u),
$$
for all $u\in S^{n-1}$.

Let $1< p\le \infty$. Firey \cite{Fir61}, \cite{Fir62} introduced the notion of what is now called the {\em $L_p$ sum} of compact convex sets $K$ and $L$ containing the origin.  (The operation has also been called Firey addition, as in \cite[Section~24.6]{BZ}.)  This is the compact convex set $K+_pL$ defined by
\begin{equation}\label{Lpaddition}
h_{K+_pL}(u)^p=h_K(u)^p+h_L(u)^p,
\end{equation}
for $u\in S^{n-1}$ and $p<\infty$, and by
$$
h_{K+_{\infty}L}(u)=\max\{h_K(u),h_L(u)\},
$$
for all $u\in S^{n-1}$.  In the hands of Lutwak \cite{L1}, \cite{L2}, this definition led to the extensive and powerful $L_p$-Brunn-Minkowski theory (see \cite{GHW} for more information and references). An extension of the $L_p$ sum to arbitrary sets in $\R^n$ was given by  Lutwak, Yang, and Zhang \cite{LYZ} (see also \cite[Example~6.7]{GHW}).

If $K$, $L$, and $M$ are arbitrary sets with $K,L\subset\R^n$ and $M\subset \R^2$, the {\em $M$-sum} of $K$ and $L$ can be defined by
\begin{equation}\label{altdef}
K\oplus_M L= \bigcup\left\{a K+bL : (a,b)\in M\right\}.
\end{equation}
See \cite[Section~6]{GHW}. It appears that $M$-addition was first introduced, for centrally symmetric compact convex sets $K$ and $L$ and a $1$-unconditional compact convex set $M$ in $\R^2$, by Protasov \cite{Pro97}, who proved that $\oplus_M:\left({\mathcal{K}}^n_s\right)^2\rightarrow {\mathcal{K}}^n_s$ for such $M$.  (This proof is omitted in the English translation but can also be found in \cite[Section~6]{GHW}.)

Let ${\mathcal Z}^n$ be the class of {\em zonoids} in $\R^n$, i.e., Hausdorff limits of {\em zonotopes}, finite Minkowski sums of line segments.  Then ${\mathcal Z}_{o}^n$ denotes the sets in ${\mathcal Z}^n$ that have nonempty interior and ${\mathcal Z}_{s}^n$ or ${\mathcal Z}_{os}^n$ signify the $o$-symmetric members of ${\mathcal Z}^n$ or ${\mathcal Z}_{o}^n$, respectively.

The {\em surface area measure} $S(K,\cdot)$ of a convex body $K$ is defined for
Borel subsets $E$ of $S^{n-1}$ by
$$
S(K,E)={\mathcal{H}}^{n-1}\left(g^{-1}(K,E)\right),
$$
where $g^{-1}(K,E)$ is the set of points in $\partial K$ at which there is an outer unit normal vector in $E$.  The quantity $S(K)=S(K,S^{n-1})$ is the {\em surface area} of $K$.  Surface area measures are weakly continuous, meaning that if $K_m, K\in {\mathcal K}^n$ and $\delta(K_m,K)\to 0$ as $m\to \infty$, then $S(K_m,\cdot)$ converges weakly to $S(K,\cdot)$; see \cite[p.~205]{Sch93}.

Let ${\mathcal{M}}_+(S^{n-1})$ be the set of finite nonnegative Borel measures in $S^{n-1}$.  {\em Minkowski's existence theorem} \cite[Theorem~A.3.2]{Gar06}, \cite[Theorem~7.1.2]{Sch93} states that $\mu\in {\mathcal{M}}_+(S^{n-1})$ is the surface area measure of some convex body in $\R^n$ if and only if $\mu$ is not concentrated on any great subsphere of $S^{n-1}$ and its centroid is at the origin.

We define the {\em Blaschke sum} $K\,\sharp\,L$ of convex bodies $K$ and $L$ in $\R^n$ to be the unique convex body with centroid at the origin such that
\begin{equation}\label{Bladef}
S(K\,\sharp \,L,\cdot)=S(K,\cdot)+S(L,\cdot).
\end{equation}
Its existence is guaranteed by Minkowski's existence theorem. Our definition of Blaschke sum agrees with that of Gr\"{u}nbaum \cite{Gru03} (who restricts attention to polytopes).  Schneider's definition \cite[p.~394]{Sch93} has the area centroid \cite[p.~305]{Sch93}, rather than the centroid, at the origin, while in \cite{SchS} the Steiner point is used instead. The position of the Blaschke sum is left open in \cite{Fir65a} and \cite[p.~130]{Gar06}, and in \cite{AKK} it is defined as an operation between translation classes of convex bodies.  With one possible exception (Proposition~\ref{BGLncov}), the results of our paper are unaffected by the choice from these definitions since we usually work with $o$-symmetric sets, for which the centroid, area centroid, and Steiner point all lie at the origin.

The {\it projection body} of $K\in \cK^n$ is the $o$-symmetric set $\Pi K\in
\cK^n_s$ defined by
\begin{equation}\label{Cauchy}
h_{\Pi K}(u)=\frac12\int_{S^{n-1}}|u\cdot v|\,dS(K,v),
\end{equation}
where $u\in S^{n-1}$.  By {\em Cauchy's projection formula} \cite[(A.45), p.~408]{Gar06}, the function on the right of (\ref{Cauchy}) is the {\em brightness function} $b_K$ of $K$, defined by $b_K(u)=V(K|u^{\perp})$, for $u\in S^{n-1}$. Every projection body is an $o$-symmetric zonoid and if $K\in \cK^n_{o}$, then $\Pi K\in \cZ^n_{os}$.  In fact, a set $Z\in \cK^n$ is an $o$-symmetric zonoid if and only if
\begin{equation}\label{zonmeas}
h_{Z}(u)=\frac12\int_{S^{n-1}}|u\cdot v|\,d\mu_Z(v),
\end{equation}
for all $u\in S^{n-1}$, where the uniquely determined {\em even} measure $\mu_Z\in {\mathcal{M}}_+(S^{n-1})$ is called the {\em generating measure} of $Z$.  See \cite[Section~4.1]{Gar06}; the factor 1/2 in (\ref{zonmeas}), usually omitted in the definition of the generating measure, is inserted here for later convenience. Comparing (\ref{Cauchy}) and (\ref{zonmeas}), we see that the generating measure $\mu_{\Pi K}$ of $\Pi K$ is $\left(S(K,\cdot)+S(-K,\cdot)\right)/2$.  Moreover, (\ref{Cauchy}), (\ref{zonmeas}), and Minkowski's existence theorem imply that $\Pi: \cK^n_{os}\to \cZ^n_{os}$ is a bijection.

If $\phi\in GL(n)$ and $K\in \cK^n_{o}$, then by \cite[Theorem~4.1.5]{Gar06},
\begin{equation}\label{Pit}
\Pi(\phi K)=|\det\phi|\phi^{-t}(\Pi K),
\end{equation}
where $\phi^{-t}$ denotes the linear transformation whose standard matrix is the inverse of the transpose of that of $\phi$.  From this it is straightforward to conclude that for $K\in \cZ^n_{os}$,
\begin{equation}\label{Pimt}
\Pi^{-1}(\phi K)=|\det\phi|^{1/(n-1)}\phi^{-t}\left(\Pi^{-1}K\right).
\end{equation}

A {\em mixed volume} $V(K_{i_1},\dots,K_{i_n})$ is a coefficient in the expansion of $V(t_1K_1+\cdots+t_nK_n)$ as a homogeneous polynomial of degree $n$ in the parameters $t_1,\dots,t_n\ge 0$, where $K_1,\dots,K_n\in \cK^n$. The notation $V(K,i;L,n-i)$, for example, means that there are $i$ copies of $K$ and $n-i$ copies of $L$.  Mixed volumes are multilinear (i.e., linear in each variable) and satisfy
\begin{equation}\label{mvc}
V(\phi K_1,\dots,\phi K_n)=|\det\phi|V(K_1,\dots,K_n),
\end{equation}
for $\phi\in GL(n)$. See \cite[Section~A.3]{Gar06} or \cite[Section~5.1]{Sch93}.  We shall also use the formula
\begin{equation}\label{smvol}
V(K;L,n-1)=\int_{S^{n-1}}h_K(u)\,dS(L,u),
\end{equation}
for $K, L\in \cK^n$; see \cite[(A.32), p.~404]{Gar06}.

\section{L\'{e}vy-Prokhorov metrics}\label{LevProk}

Let $\mu, \nu\in {\mathcal{M}}_+(S^{n-1})$ and define
\begin{equation}\label{Pro}
d_{LP}(\mu,\nu)=\inf\{\ee>0:\mu(A)\le\nu(A_{\ee})+\ee,\
\nu(A)\le\mu(A_{\ee})+\ee,\ A \text{ Borel in } S^{n-1}\},
\end{equation}
where
$$A_\ee=\{u\in S^{n-1}:\exists v\in A: |u-v|<\ee\}.$$
Then $d_{LP}$ is a metric on ${\mathcal{M}}_+(S^{n-1})$ called the {\em L\'{e}vy-Prokhorov metric}. It has the property that $d_{LP}(\mu_k,\mu)\to 0$ if and only if $\mu_k$ converges weakly to $\mu$; see \cite[Theorem~6.8]{Bil99}.

By {\em Aleksandrov's uniqueness theorem} \cite[Theorem~3.3.1]{Gar06}, a convex body is uniquely determined, up to translation, by its surface area measure.  It follows that a convex body $K$ with centroid at the origin can be identified with $S(K,\cdot)$. Then the class of such bodies can be given the topology arising from the L\'{e}vy-Prokhorov metric defined by
\begin{equation}\label{LPdef}
\delta_{LP}(K,L)=d_{LP}\left(S(K,\cdot), S(L,\cdot)\right).
\end{equation}
Similarly, the class of $o$-symmetric zonoids can be given the topology arising from the L\'{e}vy-Prokhorov metric defined by
\begin{equation}\label{LPdefZ}
\overline{\delta}_{LP}(K,L)=d_{LP}(\mu_K,\mu_L).
\end{equation}
for $K,L\in \cZ^n_{s}$. Note that the zonoids need not have nonempty interiors.

The following proposition states that the projection body operator $\Pi:{\mathcal K}_{os}^n\to {\mathcal Z}_{os}^n$ is an isometry if we equip each class with their respective L\'{e}vy-Prokhorov metrics.

\begin{prp}\label{projection body operator isometry}
If $K, L\in {\mathcal K}_{os}^n$, then
$$\overline{\delta}_{LP}(\Pi K, \Pi L)=\delta_{LP}(K,L).$$
\end{prp}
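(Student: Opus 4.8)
The plan is to trace through the definitions and reduce everything to a single algebraic identity relating the two relevant measures. Recall from (\ref{LPdef}) and (\ref{LPdefZ}) that
$$\delta_{LP}(K,L)=d_{LP}\left(S(K,\cdot),S(L,\cdot)\right)\quad\text{and}\quad\overline{\delta}_{LP}(\Pi K,\Pi L)=d_{LP}(\mu_{\Pi K},\mu_{\Pi L}).$$
So the statement is equivalent to the assertion that $d_{LP}(\mu_{\Pi K},\mu_{\Pi L})=d_{LP}\left(S(K,\cdot),S(L,\cdot)\right)$. The excerpt already records, immediately after (\ref{zonmeas}), that the generating measure of $\Pi K$ is $\mu_{\Pi K}=\left(S(K,\cdot)+S(-K,\cdot)\right)/2$. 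But $K\in {\mathcal K}_{os}^n$ is $o$-symmetric, so $-K=K$ and hence $S(-K,\cdot)=S(K,\cdot)$ (more precisely, $S(-K,\cdot)$ is the pushforward of $S(K,\cdot)$ under the antipodal map, which coincides with $S(K,\cdot)$ since $S(K,\cdot)$ is even for $o$-symmetric $K$). Therefore $\mu_{\Pi K}=S(K,\cdot)$, and likewise $\mu_{\Pi L}=S(L,\cdot)$. Plugging this in gives $d_{LP}(\mu_{\Pi K},\mu_{\Pi L})=d_{LP}\left(S(K,\cdot),S(L,\cdot)\right)$, which is exactly what we want.

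So the proof reduces to two small verifications: first, that for an $o$-symmetric convex body $K$ the surface area measure $S(K,\cdot)$ is even (invariant under $u\mapsto -u$); and second, that consequently $S(-K,\cdot)=S(K,\cdot)$, so that the averaged formula for $\mu_{\Pi K}$ collapses. Both follow from the behaviour of surface area measure under reflection: in general $S(-K,E)=S(K,-E)$ for Borel $E\subseteq S^{n-1}$, because $x\in\partial K$ has outer normal $v$ precisely when $-x\in\partial(-K)$ has outer normal $-v$, and reflection is an isometry of $\R^n$ preserving ${\mathcal H}^{n-1}$. When $K=-K$ this gives $S(K,E)=S(K,-E)$ for all Borel $E$, i.e.\ $S(K,\cdot)$ is even, and simultaneously $S(-K,\cdot)=S(K,\cdot)$.

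There is essentially no obstacle here; the only mild subtlety is bookkeeping about whether ``$o$-symmetric'' forces the surface area measure to be even, which the reduction needs, and this is the short computation just indicated. One should also note that the identification of a centered convex body with its surface area measure (via Aleksandrov's uniqueness theorem, as in the paragraph preceding (\ref{LPdef})) and of an $o$-symmetric zonoid with its generating measure (via (\ref{zonmeas})) is what makes $\delta_{LP}$ and $\overline{\delta}_{LP}$ genuine metrics in the first place, so the equality of the two Lévy--Prokhorov distances is not just a formal coincidence but reflects the fact that $\Pi$ restricted to ${\mathcal K}_{os}^n$ is, on the level of measures, the identity map $S(K,\cdot)\mapsto\mu_{\Pi K}=S(K,\cdot)$. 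In short: unravel the definitions, invoke $\mu_{\Pi K}=\left(S(K,\cdot)+S(-K,\cdot)\right)/2$ from the preliminaries, use $o$-symmetry to get $S(-K,\cdot)=S(K,\cdot)$, and the claimed isometry is immediate.
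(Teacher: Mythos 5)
Your proposal is correct and follows essentially the same route as the paper: both reduce the claim to the identity $\mu_{\Pi K}=S(K,\cdot)$ for $o$-symmetric $K$ and then read off the equality of the two L\'{e}vy--Prokhorov distances from the definitions. The paper states this identity without comment, while you supply the short verification that $S(-K,\cdot)=S(K,\cdot)$; this is a harmless elaboration, not a different argument.
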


\begin{proof}
Since $K, L\in {\mathcal K}_{os}^n$, we have $\mu_{\Pi K}=S(K,\cdot)$ and $\mu_{\Pi L}=S(L,\cdot)$. By (\ref{LPdef}) and (\ref{LPdefZ}), we have
$$\overline{\delta}_{LP}(\Pi K, \Pi L)=d_{LP}(\mu_{\Pi K}, \mu_{\Pi L})=d_{LP}\left(S(K,\cdot), S(L,\cdot)\right)=\delta_{LP}(K,L).$$
\end{proof}

We end this section with two results showing that convergence in either of the L\'{e}vy-Prokhorov metrics $\delta_{LP}$ or $\overline{\delta}_{LP}$ is equivalent to convergence in the Hausdorff metric. The first of these will be used in Examples~\ref{BlasEx1} and~\ref{BlasEx2}, but is not needed for our main result, Theorem~\ref{thmBlaschke}.  It is essentially stated and proved by Fenchel and Jessen \cite[Satz~VIII]{FenJ38}, but is included here for the reader's convenience.

\begin{prp}\label{SecondLPHaus}
Let $K_m,K\in  \cK^n_{os}$ for all $m\in \N$.  Then $\delta_{LP}(K_m,K)\rightarrow 0$ as $m\to \infty$ if and only if $\delta(K_m,K)\rightarrow 0$ as $m\to \infty$.
\end{prp}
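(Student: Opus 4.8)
The plan is to prove both implications by relating each of the two L\'evy-Prokhorov convergences to weak convergence of surface area measures, and then invoking the weak continuity of the map $K\mapsto S(K,\cdot)$ together with a compactness argument. For the forward direction, suppose $\delta_{LP}(K_m,K)\to 0$. By (\ref{LPdef}) this means $d_{LP}(S(K_m,\cdot),S(K,\cdot))\to 0$, which by the characterization of the L\'evy-Prokhorov metric in terms of weak convergence (cited from \cite[Theorem~6.8]{Bil99}) is equivalent to $S(K_m,\cdot)\to S(K,\cdot)$ weakly. In particular, setting $A=S^{n-1}$, the total masses $S(K_m)=S(K_m,S^{n-1})$ converge to $S(K,S^{n-1})>0$, so the surface areas are bounded and bounded away from $0$. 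An $o$-symmetric convex body $K_m$ with centroid at the origin (hence with $o\in\inte K_m$) and bounded surface area is also bounded in diameter, by the isoperimetric inequality or by a direct support-function estimate; thus the $K_m$ lie in a fixed large ball. By the Blaschke selection theorem, every subsequence of $(K_m)$ has a further subsequence converging in the Hausdorff metric to some $K'\in\cK^n_{os}$. By weak continuity of surface area measures, $S(K_{m_j},\cdot)\to S(K',\cdot)$ weakly, but also $S(K_{m_j},\cdot)\to S(K,\cdot)$ weakly along that subsequence; hence $S(K',\cdot)=S(K,\cdot)$, and since both $K$ and $K'$ have centroid at the origin, Aleksandrov's uniqueness theorem gives $K'=K$. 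As every subsequence has a further subsequence converging to $K$, the whole sequence converges: $\delta(K_m,K)\to 0$.

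For the converse, suppose $\delta(K_m,K)\to 0$. Then $S(K_m,\cdot)\to S(K,\cdot)$ weakly by the stated weak continuity of surface area measures (\cite[p.~205]{Sch93}), and therefore $d_{LP}(S(K_m,\cdot),S(K,\cdot))\to 0$ by \cite[Theorem~6.8]{Bil99} again, i.e.\ $\delta_{LP}(K_m,K)\to 0$. This direction is essentially immediate once the equivalence between L\'evy-Prokhorov convergence and weak convergence is in hand.

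The main obstacle is the forward direction, specifically the passage from convergence of the surface area measures to a uniform bound that makes the Blaschke selection theorem applicable: one must ensure the bodies $K_m$ do not escape to infinity. The key point is that among $o$-symmetric convex bodies, control of the total surface area plus the dimension constraint (the measure is not concentrated on a great subsphere, which is automatic in the limit and stable enough here) forces a diameter bound — for an $o$-symmetric body, a large diameter in some direction $u$ forces a large projection onto $u^\perp$ of order $\diam^{n-2}$, whence $S(K_m)\gtrsim\diam(K_m)^{n-2}$ when $n\ge 3$, contradicting boundedness of $S(K_m)$. (Equivalently one can cite that Fenchel and Jessen \cite[Satz~VIII]{FenJ38} already established this, which the excerpt acknowledges; I would include the short argument for completeness as they did.) Once the uniform bound is secured, the rest is the standard selection-plus-uniqueness routine.
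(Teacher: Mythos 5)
Your converse direction and the overall skeleton of the forward direction (weak convergence of $S(K_m,\cdot)$, a uniform bound placing the $K_m$ in a fixed ball, Blaschke selection, then weak continuity of surface area measures plus Aleksandrov's uniqueness theorem) match the paper. The gap is in the one step you yourself single out as the main obstacle: the uniform bound $K_m\subset RB^n$. You derive it from the claim that, for an $o$-symmetric convex body, a bound on the surface area bounds the diameter, via ``a large diameter in direction $u$ forces a large projection onto $u^{\perp}$'' and ``$S(K_m)\gtrsim\diam (K_m)^{n-2}$ for $n\ge 3$.'' Both claims are false. Take $K=[-M,M]\times[-\ee,\ee]^{n-1}$: its diameter is of order $M$, its projection onto $e_1^{\perp}$ has $(n-1)$-volume $(2\ee)^{n-1}$, and $S(K)=2(2\ee)^{n-1}+4(n-1)M(2\ee)^{n-2}$, which for $n\ge 3$ can be made arbitrarily small for any fixed (or even growing) $M$ by shrinking $\ee$. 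A long segment $[-su,su]\subset K$ projects onto $u^{\perp}$ to a point and onto $v^{\perp}$ (for $v\perp u$) to a set of $(n-1)$-measure zero, so a large diameter forces neither a large projection nor a large surface area. Hence bounded total mass of the surface area measures alone does not prevent the bodies from escaping to infinity, and the parenthetical appeal to ``the dimension constraint \dots stable enough here'' is not an argument.

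What the paper uses to close exactly this gap is the full-dimensionality of the limit body $K$, fed in through the brightness function: by weak convergence, $h_{\Pi K_m}(u)=\frac12\int_{S^{n-1}}|u\cdot v|\,dS(K_m,v)$ converges pointwise, hence uniformly by \cite[Theorem~1.8.12]{Sch93}, to $h_{\Pi K}(u)$, and since $\Pi K\in\cK^n_{os}$ has positive support function this gives a uniform lower bound $c_2>0$ for the brightness of $K_m$ in every direction, for all $m$. Combining $[-su,su]\subset K_m$ with $s|u\cdot v|=h_{[-su,su]}(v)\le h_{K_m}(v)$ and integrating against $S(K_m,\cdot)$ yields $c_2s\le\frac12\int_{S^{n-1}}h_{K_m}(v)\,dS(K_m,v)=\frac{n}{2}V(K_m)\le c_1n/2$, where the volume bound $V(K_m)\le c_1$ comes from the surface area bound via the isoperimetric inequality. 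So the surface area bound enters only as an upper bound on volume; the decisive input is the uniform positive lower bound on the brightness, which your proof never establishes. With that step repaired, the rest of your argument (selection plus uniqueness, and the easy converse) goes through as written.
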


\begin{proof}
Suppose that $\delta_{LP}(K_m,K)\rightarrow 0$ as $m\to \infty$.  Then
\begin{equation}\label{skm}
S(K_m)=\int_{S^{n-1}} 1\,d S(K_m,v) \rightarrow \int_{S^{n-1}} 1 \, d S(K,v) = S(K),
\end{equation}
as $m\to \infty$, so there is a $c_0> 0$ such that $S(K_m)\leq c_0$ for all $m \in \N$. By the isoperimetric inequality \cite[(B.14), p.~418]{Gar06}, there is a $c_1>0$ such that $V(K_m) \leq c_1$ for all $m \in \N$.  From (\ref{Cauchy}) and the weak convergence of surface area measures, it follows that for each $u\in S^{n-1}$, $h_{\Pi K_m}(u)\to h_{\Pi K}(u)$ as $m\to \infty$, and hence, by \cite[Theorem~1.8.12]{Sch93}, $h_{\Pi K_m}$ converges uniformly to $h_{\Pi K}$ as $m\to \infty$. Therefore, by (\ref{Cauchy}) and the fact that $\Pi K\in \cK^n_{os}$, there is a $c_2 > 0$ such that
\begin{equation}\label{nconv1}
c_2 \leq \frac 1 2 \int_{S^{n-1}} |u \cdot v| \,d S(K_m,v),
\end{equation}
for all $u \in S^{n-1}$ and $m \in \N$. For any $s > 0$, $u \in S^{n-1}$, and $m \in \N$ with $[-su,su] \subset K_m$, we have
\begin{equation}\label{nconv2}
s|u \cdot v|=h_{[-su,su]}(v) \leq h_{K_m}(v),
\end{equation}
for all $v \in S^{n-1}$. From (\ref{nconv1}) and (\ref{nconv2}), we obtain
$$c_2s
\leq \frac 1 2 \int_{S^{n-1}} s |u \cdot v| \,dS(K_m,v)
\leq \frac 1 2 \int_{S^{n-1}} h_{K_m}(v) \,dS(K_m,v)
= \frac n 2 V(K_m)
\leq c_1n/2. $$
Therefore $s\le R$, where $R=(c_1n)/(2c_2)$.  Using the $o$-symmetry of $K_m$, we conclude that $K_m \subset R B^n$, for all $m \in \N$. By Blaschke's selection theorem \cite[Theorem~1.8.6]{Sch93}, every subsequence of $K_m$ has a subsequence that converges in the Hausdorff metric.  The weak continuity of surface area measures and Aleksandrov's uniqueness theorem \cite[Theorem~3.3.1]{Gar06} force such a subsequence to converge to $K$ in the Hausdorff metric.
Consequently, $\delta(K_m,K)\to 0$ as $m\to \infty$.

Conversely, suppose that $\delta(K_m,K)\to 0$ as $m\to \infty$.  Then $S(K_m,\cdot)$ converges weakly to $S(K,\cdot)$.  By (\ref{LPdef}), it follows that $\delta_{LP}(K_m,K)\rightarrow 0$ as $m\to \infty$.
\end{proof}

The previous proposition holds, more generally, under the assumption that the sets $K$ and $K_m$, $m\in \N$, belong to $\cK^n_{o}$ and have their centroids at the origin.

\begin{lem}\label{zonoids Hausdorff equivalent weak convergence}
Let $Z_m,Z\in  \cZ^n_s$ for all $m\in \N$.  Then $\overline{\delta}_{LP}(Z_m,Z)\rightarrow 0$ as $m\to \infty$ if and only if $\delta(Z_m,Z)\rightarrow 0$ as $m\to \infty$.
\end{lem}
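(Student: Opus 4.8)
The plan is to reduce everything, via \cite[Theorem~6.8]{Bil99}, to the statement that for $o$-symmetric zonoids weak convergence of generating measures is equivalent to Hausdorff convergence, and then to play the integral representation (\ref{zonmeas}) against the uniqueness of the generating measure.

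For the forward implication, suppose $\overline{\delta}_{LP}(Z_m,Z)\to 0$. By \cite[Theorem~6.8]{Bil99}, $\mu_{Z_m}$ converges weakly to $\mu_Z$; testing against the constant function $1$ shows $\mu_{Z_m}(S^{n-1})\to\mu_Z(S^{n-1})$, so there is $c>0$ with $\mu_{Z_m}(S^{n-1})\le c$ for all $m$. Since $v\mapsto|u\cdot v|$ is bounded and continuous, (\ref{zonmeas}) gives $h_{Z_m}(u)\to h_Z(u)$ for every $u\in S^{n-1}$. Moreover, from (\ref{zonmeas}) and $\big||u\cdot v|-|w\cdot v|\big|\le|u-w|$, each $h_{Z_m}$ is bounded by $c/2$ and Lipschitz with constant $c/2$ on $S^{n-1}$, so by the Arzel\`a--Ascoli theorem every subsequence of $(h_{Z_m})$ has a uniformly convergent sub-subsequence, whose limit is $h_Z$ by pointwise convergence (one may alternatively invoke \cite[Theorem~1.8.12]{Sch93} as in the proof of Proposition~\ref{SecondLPHaus}). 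Hence $h_{Z_m}\to h_Z$ uniformly, i.e., $\delta(Z_m,Z)\to 0$ by (\ref{HD}).

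For the converse, suppose $\delta(Z_m,Z)\to 0$. Let $\sigma$ denote spherical Lebesgue measure on $S^{n-1}$. Integrating (\ref{zonmeas}) in $u$ against $\sigma$ and using Fubini's theorem together with the fact that $\int_{S^{n-1}}|u\cdot v|\,d\sigma(u)$ is a positive constant independent of $v\in S^{n-1}$ (by rotation invariance of $\sigma$), we find that $\mu_{Z_m}(S^{n-1})$ equals a fixed positive multiple of $\int_{S^{n-1}}h_{Z_m}(u)\,d\sigma(u)$; since $h_{Z_m}\to h_Z$ uniformly, the quantities $\mu_{Z_m}(S^{n-1})$ converge and in particular are bounded. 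Thus every subsequence of $(\mu_{Z_m})$ has a weakly convergent sub-subsequence with limit some $\nu\in{\mathcal M}_+(S^{n-1})$, and $\nu$ is even because each $\mu_{Z_m}$ is even and evenness passes to weak limits. Passing to the limit in (\ref{zonmeas}) along this sub-subsequence yields $h_Z(u)=\frac12\int_{S^{n-1}}|u\cdot v|\,d\nu(v)$ for all $u\in S^{n-1}$, so by the uniqueness of the generating measure $\nu=\mu_Z$. Hence every subsequence of $(\mu_{Z_m})$ has a sub-subsequence converging weakly to $\mu_Z$, so $\mu_{Z_m}\to\mu_Z$ weakly, and therefore $\overline{\delta}_{LP}(Z_m,Z)\to 0$ by \cite[Theorem~6.8]{Bil99}.

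The main obstacle is the converse direction: Hausdorff convergence of the zonoids gives no immediate control on the total masses $\mu_{Z_m}(S^{n-1})$, and without a uniform mass bound one cannot extract weakly convergent subsequences of the generating measures. The device of integrating (\ref{zonmeas}) against $\sigma$—exploiting that the spherical integral of $|u\cdot v|$ does not depend on $v$—supplies exactly that bound, after which the uniqueness of the generating measure pins down every subsequential limit. Note that full-dimensionality of the $Z_m$ and $Z$ is never used, so the argument covers all of $\cZ^n_s$.
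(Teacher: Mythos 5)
Your proof is correct. The forward direction is essentially the paper's argument (weak convergence of $\mu_{Z_m}$ gives pointwise convergence of support functions via (\ref{zonmeas}), then uniform convergence; your Arzel\`a--Ascoli detour with the explicit Lipschitz bound $c/2$ is a valid self-contained substitute for the citation of \cite[Theorem~1.8.12]{Sch93}, which the paper uses directly). The converse direction, however, takes a genuinely different route after the common first step. Both you and the paper obtain the uniform mass bound on $\mu_{Z_m}(S^{n-1})$ by integrating (\ref{zonmeas}) over the sphere via Fubini. But the paper then verifies weak convergence directly: it takes an arbitrary even $f\in C(S^{n-1})$, approximates it uniformly by a finite linear combination $\sum_i a_i|u_i\cdot v|$ (using the density of the span of these kernels in the even continuous functions), and runs a three-term $\ee$-estimate. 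You instead invoke weak-$*$ sequential compactness of the uniformly bounded sequence $(\mu_{Z_m})$ on the compact sphere, identify every subsequential limit $\nu$ as $\mu_Z$ by passing to the limit in (\ref{zonmeas}) and appealing to the uniqueness of the even generating measure, and conclude by the subsequence principle (legitimate here since $d_{LP}$ metrizes weak convergence). The two routes lean on essentially equivalent facts --- uniqueness of the generating measure is the injectivity of the cosine transform, which is dual to the density statement the paper cites --- so neither is more elementary at bottom; yours trades the explicit approximation argument for a compactness extraction, which is arguably cleaner to write but requires the additional (standard) input of Prokhorov/Banach--Alaoglu compactness. Your closing observation that full-dimensionality is never used matches the paper, which indeed states the lemma for all of $\cZ^n_s$.
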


\begin{proof}
Suppose that $\overline{\delta}_{LP}(Z_m,Z)\rightarrow 0$ as $m\to \infty$.  By (\ref{LPdefZ}), $d_{LP}(\mu_{Z_m},\mu_{Z})\rightarrow 0$ as $m\to \infty$, so $\mu_{Z_m}$ converges weakly to $\mu_{Z}$ as $m\to\infty$. In particular, we have
\begin{equation}\label{eq: weak convergence pointwise}
h_{Z_m}(u)=\frac12\int_{\Sphere} |u\cdot v|\,d\mu_{Z_m}(v) \to \frac12\int_{\Sphere} |u\cdot v|\, d\mu_Z(v)=h_{Z}(u),
\end{equation}
as $m\to \infty$, for all $u \in \Sphere$.  By \cite[Theorem~1.8.12]{Sch93},
$h_{Z_m}$ converges to $h_{Z}$ uniformly, so by (\ref{HD}), $\delta(Z_m,Z)\rightarrow 0$ as $m\to \infty$.

Conversely, suppose that $\delta(Z_m,Z)\rightarrow 0$ as $m\to \infty$.  By (\ref{HD}) again, $h_{Z_m}$ converges to $h_{Z}$ uniformly, so \eqref{eq: weak convergence pointwise} holds. By Fubini's theorem and (\ref{zonmeas}),
\begin{align}\label{eq1}
\int_{\Sphere} h_{Z_m}(u) \,du&=\frac12\int_{\Sphere} \int_{\Sphere} |u\cdot v| \,d\mu_{Z_m}(v)\,du\nonumber\\
&=\frac12\int_{\Sphere} \int_{\Sphere} |u\cdot v| \,du\,d\mu_{Z_m}(v) = \kappa_{n-1} \mu_{Z_m}(\Sphere).
\end{align}
Moreover, (\ref{eq1}) holds with $Z_m$ replaced throughout by $Z$.
Since $\delta(Z_m,Z)\rightarrow 0$ as $m\to \infty$, it follows that $\mu_{Z_m}(\Sphere)\le c_3$ for some constant $c_3>0$ and all $m \in \N$ and that
$\mu_Z(\Sphere) \leq c_3$. Now let $f \in C(\Sphere)$ and let $\ee > 0$.
We claim that there is a constant $c_4$ such that
\begin{equation}\label{claim}
\left| \int_{\Sphere} f(v) \,d\mu_{Z_m}(v) - \int_{\Sphere} f(v) \,d\mu_Z(v) \right| < c_4\ee,
\end{equation}
for all sufficiently large $m$. Because generating measures are even, we can assume that $f$ is also even. The span of the set
$$\{f_u(v)=|u\cdot v|,~v \in \Sphere: u \in \Sphere \}$$
is dense in the set of even continuous functions on $\Sphere$; see, for example, the proof of \cite[Theorem~C.2.1]{Gar06} or the references given there. It follows that there are $u_1, \ldots, u_k \in \Sphere$ and $a_1, \ldots, a_k \in \R$ such that if $g(v) =\sum_{i=1}^k a_i |u_i\cdot v|$, for $v\in S^{n-1}$, then
$\|f - g\| < \ee$.  Then
\begin{align*}
\lefteqn{\left| \int_{\Sphere} f(v) \, d\mu_{Z_m}(v) - \int_{\Sphere} f(v) \, d\mu_{Z}(v)\right|}\\
&\leq \left|\int_{\Sphere} f(v) \,d\mu_{Z_m}(v) - \int_{\Sphere} g(v) \,d\mu_{Z_m}(v) \right|+ \left|\int_{\Sphere} g(v) \,d\mu_{Z_m}(v) - \int_{\Sphere} g(v) \, d\mu_{Z}(v)\right| \\
&\phantom{\leq} + \left|\int_{\Sphere} g(v) \,d\mu_{Z}(v) - \int_{\Sphere} f(v) \, d\mu_{Z}(v)\right| \\
&\leq c_3\ee + \ee + c_3\ee,
\end{align*}
for all sufficiently large $m$, where we used the definition of $g$ and (\ref{eq: weak convergence pointwise}) to bound the second term on the right-hand side.  This proves (\ref{claim}) with $c_4=2c_3+1$. It follows that $\mu_{Z_m}$ converges weakly to $\mu_Z$ and hence that $\overline{\delta}_{LP}(Z_m,Z)\rightarrow 0$ as $m\to \infty$.
\end{proof}

\section{Properties of binary operations}\label{properties}

Suppose that $\mathcal{C}\subset\mathcal{D}\subset {\mathcal{K}}^n$. The following properties of operations $*:{\mathcal{C}}^2\rightarrow {\mathcal{D}}$ are supposed to hold for all appropriate $K, L, M, N, K_m, L_m\in{\mathcal{C}}$.

\medskip

1. (Commutativity) $K*L=L*K$.

2. (Associativity) $K*(L*M)=(K*L)*M$.

3.  (Identity) $K*\{o\}=K=\{o\}*K$.

3$'$.  (Limit identity) $\lim_{s\to 0}\left(K*(sB^n)\right)=K=\lim_{s\to 0}\left((sB^n)*K\right)$.

4. (Continuity) $K_m\rightarrow M, L_m\rightarrow N\Rightarrow K_m*L_m\rightarrow M*N$ as $m\rightarrow\infty$.

5. ($GL(n)$ covariance) $\phi(K*L)=\phi K*\phi L$ for all $\phi\in GL(n)$.

6. (Projection covariance) $(K*L)|S=(K|S)*(L|S)$ for all $S\in {\mathcal{G}}(n,k)$, $1\le k\le n-1$.

7. (Monotonicity) $K\subset M$, $L\subset N$ $\Rightarrow K*L\subset M*N$.

\medskip

Of course, the limit identity and continuity properties must be taken with respect to some suitable metric, but in view of Proposition~\ref{SecondLPHaus} and Lemma~\ref{zonoids Hausdorff equivalent weak convergence}, for these properties, any two of the metrics $\delta$, $\delta_{LP}$, and $\overline{\delta}_{LP}$ are interchangeable on the intersection of their domains.  The limit identity property is designed as a substitute for the identity property when $\{o\}\not\in{\mathcal{C}}$, for example, when ${\mathcal{C}}={\mathcal{K}}_{os}^n$. In the definition of projection covariance, the stated property is to hold for all $1\le k\le n-1$.  However, our results never require $k>2$.

It follows from (\ref{Bladef}) that Blaschke addition is commutative and associative.  When $n=2$, Blaschke addition is the same as Minkowski addition, up to translation, so it can be extended to an operation between $o$-symmetric compact convex sets in $\R^2$ which is continuous in the Hausdorff metric.  For $n\ge 3$, such an extension does not exist, by \cite[Theorem~5.3]{GHW}.  Neither the identity property nor projection covariance apply to Blaschke addition, which is only defined for convex bodies.  From (\ref{Bladef}) it follows easily that Blaschke addition has the limit identity property with respect to the metric $\delta_{LP}$, and hence by Proposition~\ref{SecondLPHaus}, also with respect to the Hausdorff metric, at least for $o$-symmetric sets.

A proof of the following proposition was sketched by Firey \cite[p.~34]{Fir65}.  We include a detailed proof for the reader's convenience.

\begin{prp}\label{BGLncov}
Blaschke addition $\sharp: \left({\mathcal{K}}_{o}^n\right)^2\to {\mathcal{K}}_{o}^n$ is $GL(n)$ covariant.
\end{prp}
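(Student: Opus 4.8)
The plan is to reduce the identity $\phi(K\,\sharp\,L)=\phi K\,\sharp\,\phi L$ to the behaviour of mixed volumes under $GL(n)$, using only (\ref{mvc}), (\ref{smvol}), and the defining relation (\ref{Bladef}), and then to conclude via Aleksandrov's uniqueness theorem. Fix $\phi\in GL(n)$ and $K,L\in\cK^n_o$, and let $M\in\cK^n$ be arbitrary. First I would use (\ref{mvc}) with $\phi^{-1}M$ in the first slot and $K\,\sharp\,L$ in the remaining $n-1$ slots to write $V(M;\phi(K\,\sharp\,L),n-1)=|\det\phi|\,V(\phi^{-1}M;K\,\sharp\,L,n-1)$. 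Then (\ref{smvol}) and (\ref{Bladef}) give $V(\phi^{-1}M;K\,\sharp\,L,n-1)=\int_{S^{n-1}}h_{\phi^{-1}M}\,d\bigl(S(K,\cdot)+S(L,\cdot)\bigr)=V(\phi^{-1}M;K,n-1)+V(\phi^{-1}M;L,n-1)$, and applying (\ref{mvc}) again in reverse identifies $|\det\phi|\bigl(V(\phi^{-1}M;K,n-1)+V(\phi^{-1}M;L,n-1)\bigr)$ with $V(M;\phi K,n-1)+V(M;\phi L,n-1)$. On the other hand, (\ref{smvol}) and (\ref{Bladef}) applied directly to $\phi K\,\sharp\,\phi L$ give $V(M;\phi K\,\sharp\,\phi L,n-1)=V(M;\phi K,n-1)+V(M;\phi L,n-1)$. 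Hence $V(M;\phi(K\,\sharp\,L),n-1)=V(M;\phi K\,\sharp\,\phi L,n-1)$ for every $M\in\cK^n$.

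By (\ref{smvol}) this says $\int_{S^{n-1}}h_M\,dS(\phi(K\,\sharp\,L),\cdot)=\int_{S^{n-1}}h_M\,dS(\phi K\,\sharp\,\phi L,\cdot)$ for all $M\in\cK^n$. Since the linear span of $\{h_M:M\in\cK^n\}$ — equivalently, the set of differences of support functions — is dense in $C(S^{n-1})$, the two finite Borel measures $S(\phi(K\,\sharp\,L),\cdot)$ and $S(\phi K\,\sharp\,\phi L,\cdot)$ agree. By Aleksandrov's uniqueness theorem \cite[Theorem~3.3.1]{Gar06}, $\phi(K\,\sharp\,L)$ and $\phi K\,\sharp\,\phi L$ are translates of one another.

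To upgrade ``translates'' to ``equal'', I would invoke that linear maps carry centroids to centroids: since $K\,\sharp\,L$ has centroid at $o$, so does $\phi(K\,\sharp\,L)$, while $\phi K\,\sharp\,\phi L$ has centroid at $o$ by the definition of the Blaschke sum; two translates of a convex body with the same centroid coincide. This final step is exactly where the choice of position of the Blaschke sum enters, and it is the $GL(n)$-equivariance of the centroid that is being used — which accounts for the ``one possible exception'' mentioned before the statement.

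All the displayed manipulations are routine; the two points requiring a little care are the density statement needed to pass from equality of all mixed volumes to equality of the surface area measures (so that Aleksandrov's theorem applies) and the concluding centroid bookkeeping. I expect the latter to be the only genuinely substantive point, since for the alternative normalizations by the area centroid or the Steiner point the relevant translates need not coincide, and $GL(n)$ covariance can fail.
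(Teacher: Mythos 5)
Your proposal is correct and follows essentially the same route as the paper's proof: the same chain of identities via (\ref{smvol}), multilinearity, and (\ref{mvc}), the same density argument for differences of support functions in $C(S^{n-1})$ to deduce equality of the surface area measures, Aleksandrov's uniqueness theorem to get translates, and the centroid normalization (preserved under linear maps) to conclude equality. Your closing observation about why the centroid normalization matters also matches the paper's remark that this proposition is the one possible exception to the interchangeability of the various positioning conventions.
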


\begin{proof}
Let $\phi\in GL(n)$ and let $K,L\in {\mathcal{K}}_{o}^n$.  If $M$ is a nonempty compact convex set in $\R^n$, then using the definition of Blaschke addition, (\ref{smvol}), the multilinearity of mixed volumes, and (\ref{mvc}), we obtain
\begin{eqnarray*}
\int_{S^{n-1}}h_M(u)\,dS(\phi K\,\sharp\, \phi L,u)
&=&\int_{S^{n-1}}h_M(u)\, dS(\phi K ,u)+\int_{S^{n-1}}h_M(u)\,
dS(\phi L,u)\\
&=&V(M;\phi K,n-1)+V(M;\phi L, n-1)\\
&=&|\det\phi| \left(V\left(\phi^{-1}M;K, n-1\right)+V\left(\phi^{-1}M;L,n-1\right)\right)\\
&=&|\det\phi|\int_{S^{n-1}}h_{\phi^{-1} M}(u)\,dS(K\,\sharp\,  L,u)\\
&=&|\det\phi|V\left(\phi^{-1}M;K\,\sharp\, L,n-1\right)\\
&=&V(M;\phi(K\,\sharp\, L),n-1)\\
&=&\int_{S^{n-1}}h_M(u)\,dS(\phi (K\,\sharp\, L),u).
\end{eqnarray*}
The resulting equation therefore holds when $h_M$ is replaced by a difference of support functions of compact convex sets.  Since the latter are dense in the space $C(S^{n-1})$ of continuous functions on $S^{n-1}$ (see \cite[Lemma~1.7.9]{Sch93}), the equation also holds when $h_M$ is replaced by any $f\in C(S^{n-1})$.  It follows that $S(\phi K\,\sharp\, \phi L,\cdot)=S(\phi (K\,\sharp\, L),\cdot)$ and hence, by Aleksandrov's uniqueness theorem \cite[Theorem~3.3.1]{Gar06}, that $\phi (K\,\sharp\, L)$ is a translate of $\phi K\,\sharp\, \phi L$.  By our definition of Blaschke addition, the centroids of $\phi K\,\sharp\, \phi L$ and $K\,\sharp\,L$ are at the origin.  Using the latter, the definition of centroid, and the linearity of integrals, we see that the centroid of $\phi(K\,\sharp\,L)$ is also at the origin. Therefore
$$\phi (K\,\sharp\, L)=\phi K\,\sharp\, \phi L,$$
as required.
\end{proof}

We take this opportunity to note that, somewhat surprisingly, Blaschke addition is not monotonic, no matter which of the standard definitions is used.

\begin{ex}\label{NotMonotone}
{\em Let $K=[-1/2,1/2]^3\subset\R^3$ and let $L=\phi K$, where $\phi$ is a rotation by $\pi/4$ around the $x_3$-axis.  Then $K$ and $L$ are $o$-symmetric cubes.  Let $M=\conv\{K, L\}$, so that $M$ is an $o$-symmetric cylinder of height $1$ with the $x_3$-axis as axis and a regular octagon as base.  Obviously $K,L\subset M$. By adding the surface area measures of $K$ and $L$, it is easy to see that $K\,\sharp\,L$ is also an $o$-symmetric cylinder with the $x_3$-axis as axis and a regular octagon as base, whose vertical facets each have area $1$ and whose horizontal facets both have area 2.  Let $h$ be the height of $K\,\sharp\,L$ and let $s$ be the length of one of its horizontal edges.  Then we have $hs=1$ and
$$2s^2\cot(\pi/8)=2(1+\sqrt{2})s^2=2.$$
From these two equations, we obtain $h=\sqrt{1+\sqrt{2}}>\sqrt{2}$.  However, $M\,\sharp\,M=\sqrt{2}M$ has height $\sqrt{2}$, so no translate of $K\,\sharp\,L$ is contained in $M\,\sharp\,M$.}
\end{ex}

Much more information about properties of known operations between compact convex sets can be found in \cite{GHW}.

Recall that we assume that $n\ge 2$ throughout this paper.

\begin{prp}\label{FromGHW1}
An operation $*:\left({\mathcal{Z}}^n_s\right)^2\rightarrow {\mathcal{K}}^n$ is projection covariant if and only if it can be defined by
\begin{equation}\label{nn}
h_{K*L}(x)=h_{M}\left(h_K(x),h_L(x)\right),
\end{equation}
for all $K,L\in {\mathcal{Z}}^n_s$ and $x\in\R^n$, or equivalently by
\begin{equation}\label{Meq}
K*L=K\oplus_M L,
\end{equation}
where $M$ is a $1$-unconditional compact convex set in $\R^2$.  Moreover, $M$ is uniquely determined by $*$.
\end{prp}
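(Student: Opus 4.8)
The plan is to prove the two implications separately; the forward (``only if'') direction carries the content. Throughout I would use that $\mathcal{Z}^n_s$ is closed under orthogonal projection onto a subspace (a projection of a zonotope is a zonotope), so that projection covariance is meaningful on $\mathcal{Z}^n_s$, and that every centred segment $[-\alpha u,\alpha u]$ with $\alpha\ge 0$ and $u\in S^{n-1}$ lies in $\mathcal{Z}^n_s$ and so may be inserted into $*$.

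For the ``if'' direction, let $M$ be a $1$-unconditional compact convex set in $\R^2$ and $K*L=K\oplus_M L$. By Protasov's theorem \cite{Pro97}, $K\oplus_M L\in\mathcal{K}^n_s\subset\mathcal{K}^n$ (note $\mathcal{Z}^n_s\subset\mathcal{K}^n_s$), so $*$ is well defined. Since $K$ and $L$ are $o$-symmetric, $h_{aK+bL}(x)=|a|\,h_K(x)+|b|\,h_L(x)$ for all $(a,b)\in\R^2$, whence
\[
h_{K\oplus_M L}(x)=\sup_{(a,b)\in M}h_{aK+bL}(x)=\sup_{(a,b)\in M}\bigl(|a|\,h_K(x)+|b|\,h_L(x)\bigr)=h_M\bigl(h_K(x),h_L(x)\bigr),
\]
the first equality because support functions carry unions to suprema, the last because $h_K(x),h_L(x)\ge 0$ and $M$ is $1$-unconditional. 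This gives the equivalence of (\ref{nn}) and (\ref{Meq}). Projection covariance then follows: for $x\in S$ we have $h_K(x)=h_{K|S}(x)$, so (\ref{nn}) yields $h_{(K*L)|S}(x)=h_{K*L}(x)=h_M(h_{K|S}(x),h_{L|S}(x))=h_{(K|S)*(L|S)}(x)$.

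For the ``only if'' direction, assume $*$ is projection covariant. Fix $u\in S^{n-1}$ and put $S=\R u$. Since $K|S=[-h_K(u)u,h_K(u)u]$ for $K\in\mathcal{Z}^n_s$, projection covariance gives $(K*L)|S=[-h_K(u)u,h_K(u)u]*[-h_L(u)u,h_L(u)u]$; taking $K$ and $L$ to be these segments shows $[-\alpha u,\alpha u]*[-\beta u,\beta u]\subset\R u$, say $[-\alpha u,\alpha u]*[-\beta u,\beta u]=[\,a(u,\alpha,\beta)\,u,\ b(u,\alpha,\beta)\,u\,]$ with $a\le b$. Projecting $[-\alpha u,\alpha u]*[-\beta u,\beta u]$ onto a second line $\R v$ and using $[-\alpha u,\alpha u]|\R v=[-\alpha|u\cdot v|\,v,\ \alpha|u\cdot v|\,v]$, projection covariance gives, when $u\cdot v=\lambda\in(0,1)$, that $a(v,\lambda\alpha,\lambda\beta)=\lambda\,a(u,\alpha,\beta)$ and $b(v,\lambda\alpha,\lambda\beta)=\lambda\,b(u,\alpha,\beta)$, with the roles of $a$ and $b$ interchanged when $u\cdot v<0$. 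Interchanging $u$ and $v$ and iterating shows first that $a(u,\cdot,\cdot)$ and $b(u,\cdot,\cdot)$ are positively homogeneous of degree one, and then---using that for $n\ge 2$ any two unit vectors are joined by finitely many unit vectors with consecutive inner products positive---that they do not depend on $u$; write $a(\cdot,\cdot)\le b(\cdot,\cdot)$ for the resulting $1$-homogeneous functions. Now, for all $K,L\in\mathcal{Z}^n_s$ and $w\in S^{n-1}$, projection covariance onto $\R w$ gives $h_{K*L}(w)=b(h_K(w),h_L(w))$ and $h_{K*L}(-w)=-a(h_K(w),h_L(w))$; applying the first identity at $-w$ and using that $h_K$ and $h_L$ are even forces $b=-a$ on $[0,\infty)^2$. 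Hence $m:=b=-a$ is nonnegative and $1$-homogeneous, $h_{K*L}(w)=m(h_K(w),h_L(w))=h_{K*L}(-w)$ (so $K*L$ is $o$-symmetric), and by $1$-homogeneity $h_{K*L}(x)=m(h_K(x),h_L(x))$ for all $x\in\R^n$. Finally, let $M:=\bigl([-e_1,e_1]*[-e_2,e_2]\bigr)\,|\,P$ with $P:=\lin\{e_1,e_2\}$; identifying $P$ with $\R^2$ and using $h_{[-e_i,e_i]}(x)=|x\cdot e_i|$ together with the identity just proved, $M$ is a compact convex set with support function $h_M(p,q)=m(|p|,|q|)$, which is even in each coordinate, so $M$ is $1$-unconditional. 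Since $h_K(x),h_L(x)\ge 0$, we get $h_{K*L}(x)=m(h_K(x),h_L(x))=h_M(h_K(x),h_L(x))$, which by the equivalence of (\ref{nn}) and (\ref{Meq}) means $K*L=K\oplus_M L$. For uniqueness, if $K\oplus_M L=K\oplus_{M'}L$ for all $K,L\in\mathcal{Z}^n_s$, then taking $K=[-e_1,e_1]$ and $L=[-e_2,e_2]$ gives $h_M=h_{M'}$ on the first quadrant, hence $M=M'$ by $1$-unconditionality.

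The main difficulty is the middle step of the forward direction: extracting the $1$-homogeneity and the direction-independence of the coefficient functions $a(u,\cdot,\cdot)$ and $b(u,\cdot,\cdot)$ from projection covariance alone. This needs the line-projection identity to be iterated, the sign of $u\cdot v$ to be tracked carefully, and the (dimension-dependent) fact that for $n\ge 2$ any two unit vectors can be linked by a chain of unit vectors with consecutive inner products positive. Everything else---well-definedness via Protasov's theorem, the passage to support functions, the $o$-symmetry of $K*L$, and uniqueness---is then routine.
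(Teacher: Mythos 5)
Your proof is correct, and the substance of your argument is sound; the difference from the paper is organizational rather than mathematical. The paper does not reprove the result at all: its proof consists of citing \cite[Theorem~7.6]{GHW}, where the statement is established for operations on $\left(\mathcal{K}^n_s\right)^2$, and observing that the test bodies used there (only $o$-symmetric line segments in the auxiliary \cite[Lemma~7.4]{GHW}, and segments plus $o$-symmetric balls in the theorem itself) all belong to $\mathcal{Z}^n_s$, so the cited proof runs verbatim on the smaller domain. What you have written is in effect a self-contained reconstruction of that argument: reduce via projections onto lines to an induced operation on collinear centred segments; extract $1$-homogeneity and direction-independence of the endpoint functions $a(u,\cdot,\cdot)$, $b(u,\cdot,\cdot)$ by iterating the identity $a(v,\lambda\alpha,\lambda\beta)=\lambda a(u,\alpha,\beta)$ for $u\cdot v=\lambda>0$ and chaining unit vectors with positive consecutive inner products; deduce $b=-a$ on $[0,\infty)^2$ (hence the $o$-symmetry of $K*L$) from the evenness of $h_K$ and $h_L$ together with the surjectivity of $w\mapsto(h_K(w),h_L(w))$ onto $[0,\infty)^2$, realized by centred segments; and finally realize the resulting function as $h_M$ for $M=\left([-e_1,e_1]*[-e_2,e_2]\right)|P$. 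All of these steps check out, and your route has the incidental merit of invoking projection covariance only for $k=1$ (the projection onto the $2$-plane $P$ is used merely to define $M$, not through the covariance hypothesis) and of dispensing with the balls that the paper notes are used in \cite[Theorem~7.6]{GHW}. Two small points you should make explicit in a write-up: in the ``if'' direction, $(K|S)*(L|S)=(K|S)\oplus_M(L|S)\subset S$, so that agreement of support functions on $S$ genuinely yields equality of sets; and the passage from $h_{K*L}(w)=m(h_K(w),h_L(w))$ on $S^{n-1}$ to all of $\R^n$ needs $m(0,0)=0$, which follows from the positive homogeneity you established.
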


\begin{proof}
The result was proved in \cite[Theorem~7.6]{GHW} for operations $*:\left({\mathcal{K}}^n_s\right)^2\rightarrow {\mathcal{K}}^n$.  However the same proof works for operations $*:\left({\mathcal{Z}}^n_s\right)^2\rightarrow {\mathcal{K}}^n$.  To see this, note that in the proof of the auxiliary \cite[Lemma~7.4]{GHW} only $o$-symmetric line segments are used, and the proof of \cite[Theorem~7.6]{GHW} employs only $o$-symmetric line segments and $o$-symmetric balls.
\end{proof}

\begin{prp}\label{FromGHW2}
An operation $*:\left({\mathcal{Z}}^n_s\right)^2\rightarrow {\mathcal{K}}^n$  is projection covariant if and only if it is continuous in the Hausdorff metric and $GL(n)$ covariant.
\end{prp}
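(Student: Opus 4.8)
The plan is to prove the two implications separately, invoking Proposition~\ref{FromGHW1} for the forward direction and approximating orthogonal projections by elements of $GL(n)$ for the converse.

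For the direction ``projection covariant $\Rightarrow$ continuous and $GL(n)$ covariant'', I would start from Proposition~\ref{FromGHW1}, which supplies a $1$-unconditional compact convex set $M\subset\R^2$ with $K*L=K\oplus_M L$ and $h_{K*L}(x)=h_M(h_K(x),h_L(x))$ for all $K,L\in\cZ^n_s$ and $x\in\R^n$. Since $\phi K,\phi L\in\cZ^n_s$ whenever $\phi\in GL(n)$, the $GL(n)$ covariance follows at once from the union formula (\ref{altdef}) and the linearity of $\phi$:
\[
\phi(K*L)=\phi(K\oplus_M L)=\bigcup\{a\phi K+b\phi L:(a,b)\in M\}=\phi K\oplus_M\phi L=\phi K*\phi L.
\]
For continuity, if $K_m\to K$ and $L_m\to L$ in the Hausdorff metric then, by (\ref{HD}), $h_{K_m}\to h_K$ and $h_{L_m}\to h_L$ uniformly on $S^{n-1}$; the pairs $(h_{K_m}(x),h_{L_m}(x))$, for $x\in S^{n-1}$ and $m$ large, lie in a fixed compact subset of $\R^2$ on which the sublinear function $h_M$ is uniformly continuous, so $h_{K_m*L_m}=h_M(h_{K_m},h_{L_m})$ converges uniformly to $h_M(h_K,h_L)=h_{K*L}$ and hence $K_m*L_m\to K*L$ by (\ref{HD}).

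For the converse, assume $*$ is continuous in the Hausdorff metric and $GL(n)$ covariant, and fix $S\in{\mathcal G}(n,k)$ with $1\le k\le n-1$ and $K,L\in\cZ^n_s$. The idea is to write the orthogonal projection onto $S$ as a limit of maps in $GL(n)$: for $t>0$ set $\phi_t=P_S+tP_{S^{\perp}}$, where $P_S$ and $P_{S^{\perp}}$ are the orthogonal projections onto $S$ and $S^{\perp}$. Then $\phi_t$ is symmetric with $\det\phi_t=t^{n-k}\neq0$, so $\phi_t\in GL(n)$, and for any compact convex $X\subset\R^n$ and $x\in S^{n-1}$ one has $\|\phi_tx-x|S\|=t\,\|x|S^{\perp}\|\le t$, so that $h_{\phi_tX}(x)=h_X(\phi_tx)\to h_X(x|S)=h_{X|S}(x)$ uniformly in $x$ as $t\to0^{+}$, since $h_X$ is Lipschitz; hence $\phi_tX\to X|S$ in the Hausdorff metric. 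Applying $GL(n)$ covariance gives $\phi_t(K*L)=(\phi_tK)*(\phi_tL)$ for all $t>0$; letting $t\to0^{+}$ and using continuity of $*$, the left side tends to $(K*L)|S$ and the right side to $(K|S)*(L|S)$, whence $(K*L)|S=(K|S)*(L|S)$.

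I expect the only genuinely substantive point to be this last step: recognizing that the (non-invertible) orthogonal projection onto $S$ is a Hausdorff limit of the invertible maps $\phi_t$, so that $GL(n)$ covariance and continuity already encode projection covariance. A minor but necessary bookkeeping point is that $\phi_tK$, $\phi_tL$, $K|S$, and $L|S$ all belong to $\cZ^n_s$, since linear images and orthogonal projections of $o$-symmetric zonoids are again $o$-symmetric zonoids (perhaps of lower dimension, which is harmless as the domain of $*$ does not require full dimension). The forward direction should be essentially routine once Proposition~\ref{FromGHW1} is in hand.
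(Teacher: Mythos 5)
Your proposal is correct and takes essentially the same route as the paper: the paper deduces projection covariance from continuity and $GL(n)$ covariance by citing \cite[Lemma~4.1]{GHW}, whose content is precisely your approximation of the orthogonal projection onto $S$ by the invertible maps $\phi_t=P_S+tP_{S^{\perp}}$, and obtains the converse from Proposition~\ref{FromGHW1} together with the known continuity and $GL(n)$ covariance of $\oplus_M$, which you verify directly. There is no substantive difference; you have simply written out the arguments that the paper outsources to citations.
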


\begin{proof}
If $*$ is continuous and $GL(n)$ covariant, then it is projection covariant by \cite[Lemma~4.1]{GHW}. Since $\oplus_M:\left({\mathcal{K}}^n_s\right)^2\rightarrow {\mathcal{K}}_s^n$ is continuous in the Hausdorff metric and $GL(n)$ covariant, the converse follows from Proposition~\ref{FromGHW1}.
\end{proof}

\begin{prp}\label{FromGHW3}
An operation $*:\left({\mathcal{Z}}^n_s\right)^2\rightarrow {\mathcal{K}}^n$ is projection covariant and associative if and only if $*=\oplus_M$, where either $M=\{o\}$, or $M=[-e_1,e_1]$, or $M=[-e_2,e_2]$, or $M$ is the unit ball in $l^2_p$ for some $1\le p\le \infty$; in other words, if and only if either $K*L=\{o\}$, or $K*L=K$, or $K*L=L$, for all $K,L\in {\mathcal{Z}}^n_s$, or else $*=+_p$ for some $1\le p\le\infty$.
\end{prp}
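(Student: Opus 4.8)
The plan is to reduce, by Proposition~\ref{FromGHW1}, to a functional equation for the support function of $M$ and then to solve it; the argument is the zonoid counterpart of the corresponding result for $o$-symmetric compact convex sets in \cite{GHW}, and carries over because only $o$-symmetric line segments and balls intervene, just as in Propositions~\ref{FromGHW1} and~\ref{FromGHW2}. The ``if'' direction is straightforward: each set $M$ in the list is $1$-unconditional and compact convex, hence $\oplus_M$ is projection covariant by Proposition~\ref{FromGHW1}, and its associativity follows in every case from $h_{K\oplus_M L}(x)=h_M(h_K(x),h_L(x))$ by direct computation (when $M$ is the unit ball of $l^2_p$ this amounts to $\bigl((s^p+t^p)^{1/p}\bigr)^p+u^p=s^p+t^p+u^p$ together with associativity of the maximum, and the remaining cases are trivial). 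For the ``only if'' direction, let $*$ be projection covariant and associative. By Proposition~\ref{FromGHW1}, $*=\oplus_M$ for a unique $1$-unconditional compact convex $M\subset\R^2$. Put $F(s,t)=h_M(s,t)$ on the closed first quadrant $Q=[0,\infty)^2$; then $F$ determines $M$ (via $h_M(y)=F(|y_1|,|y_2|)$) and is continuous, positively homogeneous of degree $1$, nondecreasing in each variable, subadditive, and nonnegative (a $1$-unconditional compact convex set contains $o$). Since $a[-e_1,e_1]\oplus_M b[-e_2,e_2]$ is merely a rescaled copy of $M$ sitting in a coordinate $2$-plane, it is an $o$-symmetric zonoid; applying associativity to such segments and to a dilatate of $B^n$ and evaluating at a suitable point such as $e_1+e_2$, one finds that associativity of $*$ is equivalent to
\begin{equation*}
F\bigl(F(s,t),u\bigr)=F\bigl(s,F(t,u)\bigr)\qquad\text{for all }s,t,u\ge0.
\end{equation*}

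We then solve this equation. With $a=F(1,0)$ and $b=F(0,1)$, homogeneity gives $F(s,0)=as$ and $F(0,t)=bt$, while monotonicity and subadditivity give $\max\{as,bt\}\le F(s,t)\le as+bt$ on $Q$. If $a=b=0$, then $F\equiv0$, so $M=\{o\}$ and $K*L=\{o\}$. If $a>0=b$, then $F(s,t)=as$, and substituting into the functional equation yields $a^2s=as$ for all $s\ge0$, so $a=1$, $M=[-e_1,e_1]$, and $K*L=K$; symmetrically, $b>0=a$ gives $M=[-e_2,e_2]$ and $K*L=L$.

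There remains the case $a,b>0$. Taking $t=0$ and then $s=0$ in the functional equation, and using homogeneity, gives $F(as,u)=F(s,bu)$ and $F(bt,bu)=bF(t,u)=F(bt,u)$; iterating these identities against the growth estimate $F(s,t)\ge\max\{as,bt\}$ forces $a=b=1$. Thus $F(s,0)=s=F(0,s)$, so $F$ is a continuous, monotone, associative binary operation on $[0,\infty)$ with $0$ as neutral element. The classical structure theory of such operations, together with the homogeneity of $F$ and the convexity of its sublevel set $\{(s,t)\in Q:F(s,t)\le1\}$, forces $F(s,t)=(s^p+t^p)^{1/p}$ for some $1\le p<\infty$ or $F(s,t)=\max\{s,t\}$; equivalently, $M$ is the unit ball of $l^2_p$ for some $1\le p\le\infty$ and $*=+_p$. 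The heart of the proof, and the step I expect to require the most care, is this last classification: one must exclude the ``ordinal sum'' operations obtained by gluing together copies of the maximum and of Archimedean pieces $g^{-1}(g(s)+g(t))$. Here homogeneity is decisive, since the set of idempotents $\{s\ge0:F(s,s)=s\}$ is scale invariant and hence equals $\{0\}$ or all of $[0,\infty)$ --- in the first case $F$ is Archimedean and homogeneity forces its additive generator to be a power function, so $1\le p<\infty$ by subadditivity, while in the second $F=\max$. This is precisely the analysis carried out in \cite{GHW}.
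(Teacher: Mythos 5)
Your overall strategy is, in expanded form, exactly what the paper does: its proof of Proposition~\ref{FromGHW3} is a two-line citation of \cite[Theorem~7.9]{GHW} together with the observation that the proof there only uses $o$-symmetric balls and hence transfers verbatim from $\left({\mathcal{K}}^n_s\right)^2$ to $\left({\mathcal{Z}}^n_s\right)^2$. You have instead reconstructed the content of the cited theorem: reduction via Proposition~\ref{FromGHW1} to the associativity equation for $h_M$, then the classification of continuous, homogeneous, monotone, associative operations on $[0,\infty)$ with neutral element $0$ (which in \cite{GHW} rests on Pearson's structure theory \cite{Pea66}). Your case analysis on $a=F(1,0)$ and $b=F(0,1)$, and the scale-invariance argument showing the idempotent set is $\{0\}$ or all of $[0,\infty)$, are correct.

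The one step that fails as written is the derivation of the functional equation $F\bigl(F(s,t),u\bigr)=F\bigl(s,F(t,u)\bigr)$. Associativity of $*:\left({\mathcal{Z}}^n_s\right)^2\to{\mathcal{K}}^n$ can only be invoked for triples $K,L,N$ for which \emph{both} intermediate products $K*L$ and $L*N$ lie back in the domain ${\mathcal{Z}}^n_s$. With your choice of two coordinate segments and a dilatate of $B^n$, one of the two intermediate products is necessarily of the form $s[-e_i,e_i]\oplus_M uB^n$, whose support function is $h_M(s|x_i|,u|x|)$; since $M$ is not yet known at this stage of the argument, you cannot certify that this set is a zonoid, so one side of the associativity identity is simply not available. (Your other intermediate product, $s[-e_1,e_1]\oplus_M t[-e_2,e_2]$, is a planar $o$-symmetric convex body and hence genuinely a zonoid, as you say.) The repair is precisely the point of the paper's remark: take $K=sB^n$, $L=tB^n$, $N=uB^n$. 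Then $K*L=F(s,t)B^n$ and $L*N=F(t,u)B^n$ are balls, hence zonoids, and evaluating support functions at any unit vector yields the functional equation for all $s,t,u\ge0$ with no further hypotheses. With that substitution your argument goes through. A minor indexing remark, which the paper's own statement also glosses over: if $F(s,t)=(s^p+t^p)^{1/p}$ then $M$ is the unit ball of $l^2_{p'}$ with $1/p+1/p'=1$, not of $l^2_p$; since $p\mapsto p'$ permutes $[1,\infty]$ this does not affect the equivalence of the two lists in the statement.
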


\begin{proof}
Again, the result was proved in \cite[Theorem~7.9]{GHW} for operations $*:\left({\mathcal{K}}^n_s\right)^2\rightarrow {\mathcal{K}}^n$, but since the proof only uses $o$-symmetric balls, it also applies to operations $*:\left({\mathcal{Z}}^n_s\right)^2\rightarrow {\mathcal{K}}^n$.
\end{proof}

\section{A characterization of Minkowski addition}\label{Minkowski}

\begin{thm}\label{characterization Minkowski addition zonoids1}
If $n\ge 3$, then $*:\left( \cZ^n_s\right)^2\to \cZ^n_s$ is projection covariant (or, equivalently, continuous in the Hausdorff metric and $GL(n)$ covariant) if and only if $K*L=aK+bL$, for some $a,b\ge 0$ and all $K,L\in {\mathcal{Z}}^n_s$.
\end{thm}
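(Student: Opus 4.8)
We prove the nontrivial (``only if'') implication; the converse is routine. For fixed $a,b\ge 0$ the operation $(K,L)\mapsto aK+bL$ maps $\mathcal Z^n_s\times\mathcal Z^n_s$ into $\mathcal Z^n_s$, since zonoids are closed under dilatation, Minkowski addition, and reflection in $o$. It is continuous in the Hausdorff metric, because $h_{aK+bL}=a\,h_K+b\,h_L$ and (\ref{HD}) holds, and it is $GL(n)$ covariant, because $\phi(aK+bL)=a\,\phi K+b\,\phi L$; by Proposition~\ref{FromGHW2} it is therefore also projection covariant.

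Now suppose $*:(\mathcal Z^n_s)^2\to\mathcal Z^n_s$ is projection covariant. By Proposition~\ref{FromGHW1} there is a unique $1$-unconditional compact convex set $M\subset\R^2$ with $h_{K*L}(x)=h_M(h_K(x),h_L(x))$ for all $K,L\in\mathcal Z^n_s$ and $x\in\R^n$. Put $a=h_M(e_1)$ and $b=h_M(e_2)$; since $o\in M$, we have $a,b\ge 0$, and since $\{o\}\in\mathcal Z^n_s$ with $h_{\{o\}}\equiv 0$, positive homogeneity of $h_M$ yields $K*\{o\}=aK$ and $\{o\}*L=bL$ for all $K,L$. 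If $a=0$, then $h_M(\pm e_1)=0$ forces $M=[-b,b]e_2$, hence $h_M(s,t)=b|t|$ and $K*L=bL=0\cdot K+bL$; symmetrically, $b=0$ gives $K*L=aK$. So we may assume $a,b>0$, and the theorem reduces to proving that $M$ is the box $[-a,a]\times[-b,b]$, equivalently that $(a,b)\in M$, equivalently that $h_M(s,t)=as+bt$ on $[0,\infty)^2$, equivalently that $K*L=aK+bL$ for all $K,L\in\mathcal Z^n_s$. Since the zonoids contained in a fixed $3$-dimensional subspace $S\subset\R^n$ are exactly the zonoids of $S$, and $K*L\subset S$ whenever $K,L\subset S$, it further suffices to argue in $\R^3$. (That $n\ge 3$ is used essentially here: in $\R^2$ every $o$-symmetric convex body is a zonoid, the range condition is vacuous, and the assertion is false --- for instance $*=+_p$ with $p\ne 1$ is a counterexample.)

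Assume, for a contradiction, that $(a,b)\notin M$. Then the convex function $\psi(s,t)=h_M(s,t)$ on $[0,\infty)^2$ coincides with $as+bt$ on the two coordinate axes but is strictly smaller at some interior point, so $M$ is genuinely ``rounded off'' near the corner $(a,b)$. The task is then to exhibit $o$-symmetric zonoids $K,L$ in $\R^3$ for which $K\oplus_M L$ is not a zonoid. I would assemble $K$ and $L$ from line segments in sufficiently general position that $h_{K*L}$ depends nontrivially on all three coordinates --- a natural first attempt being $K=[-e_1,e_1]+[-e_2,e_2]$ and $L=[-e_1,e_1]+[-e_3,e_3]$, which gives $h_{K*L}(x)=h_M(|x_1|+|x_2|,\,|x_1|+|x_3|)$ --- and then certify that $K*L$ is not a zonoid: if $M$ is a polygon then $K*L$ is a polytope and one can look for a non-centrally-symmetric $2$-face, while for smooth $M$ one can look for a negative value of the inverse spherical cosine transform of $h_{K*L}$, the general case being reached from these by approximation. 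An alternative is to arrange that $K*L$ is a dilate of a body already known not to be a zonoid in $\R^3$, such as a cross-polytope or an $\ell_p$-ball with $p$ close to $1$. Once a non-zonoid output is produced, the contradiction gives $(a,b)\in M$, and the proof is complete.

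The main obstacle is exactly this construction of a witness, which must be made to work for \emph{every} non-box $1$-unconditional $M$, not merely the ``associative'' shapes of Proposition~\ref{FromGHW3}. The obvious test inputs are useless: if $h_K$ and $h_L$ are proportional, or if $K$ and $L$ together span at most two dimensions, then $K*L$ is at most $2$-dimensional (or a ball), hence automatically a zonoid. Thus the test zonoids have to be engineered so that $K*L$ is simultaneously fully $3$-dimensional and asymmetric enough to be detected by a usable non-zonoid criterion, and carrying this out --- together with the approximation step linking polygonal and smooth $M$ --- is where the real difficulty lies.
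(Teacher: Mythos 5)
Your reduction is the right one and matches the paper's: invoke Proposition~\ref{FromGHW1} to get $*=\oplus_M$ for a unique $1$-unconditional compact convex $M\subset\R^2$, dispose of the degenerate cases, and observe that everything comes down to showing $(a,b)\in M$, i.e.\ $h_M(1,1)=h_M(1,0)+h_M(0,1)$. But at exactly that point your argument stops being a proof. You propose to assume $(a,b)\notin M$ and exhibit zonoids $K,L$ with $K\oplus_M L$ not a zonoid, certifying this by inspecting $2$-faces when $M$ is polygonal, by the sign of the inverse cosine transform when $M$ is smooth, and by an unspecified approximation argument in between --- and you yourself concede that making this work for \emph{every} non-box $M$ is ``where the real difficulty lies.'' That is the entire content of the theorem, and none of it is carried out. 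In particular, the approximation step is not innocuous: non-zonoids can be Hausdorff limits of zonoids (that is what zonoids are), so a limiting argument from polygonal or smooth $M$ to general $M$ does not obviously preserve a ``non-zonoid witness.''

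The missing idea in the paper is a single analytic necessary condition that applies uniformly to all $M$ at once: Hlawka's inequality (\ref{eq: Hlawka's inequality}), which every support function of a zonoid must satisfy \cite[Theorem~3.4]{GooW93}. The paper takes $K=\conv\{\pm e_1,\pm e_2\}$ and $L=[-e_3,e_3]$, so that $h_{K*L}(w)=h_M(\max\{|w_1|,|w_2|\},|w_3|)$, and substitutes the vectors $x=(-1,1,s,0,\dots,0)$, $y=(1,-1,0,\dots,0)$, $z=(1,1,0,\dots,0)$ into Hlawka's inequality. After using homogeneity this yields $h_M(2,2s)\ge h_M(0,s)+h_M(2,s)$, and since the reverse inequality is just subadditivity of $h_M$, one gets the \emph{equality} $h_M(2,2s)=h_M(0,s)+h_M(2,s)$; a second choice of $K$, $L$, and test vectors gives the companion identity $h_M(2t,2)=h_M(t,0)+h_M(t,2)$, and setting $s=1$, $t=2$ combines the two into $h_M(1,1)=h_M(1,0)+h_M(0,1)$. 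This is a direct argument, with no contradiction, no case split on the shape of $M$, and no need ever to certify that any particular body fails to be a zonoid. Without some such uniform necessary condition (Hlawka's inequality or an equivalent), your proposal has a genuine gap at its central step.
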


\begin{proof}
By Proposition~\ref{FromGHW1}, $*=\oplus_M$ for some $1$-unconditional convex body $M$ in $\R^2$, so for this $M$, the equation (\ref{nn}) holds
for all $K,L\in {\mathcal{Z}}^n_s$ and $x\in\R^n$.  We have to show that $M = [-a,a] \times [-b,b]$, as this is the unique $1$-unconditional convex body $M$ for which (\ref{nn}) is equivalent to
$$h_{K*L}(x)=ah_K(x)+bh_L(x)=h_{aK+bL},$$
for all $K,L\in {\mathcal{Z}}^n_s$ and $x\in\R^n$.  It will suffice to show that
\begin{equation}\label{goal}
    h_M(1,1) = h_M(1,0) + h_M(0,1),
\end{equation}
since if $a=h_M(1,0)$ and $b=h_M(0,1)$, then $v=(a,b)$ is the
only point in $\R^2$ satisfying $v \cdot (1,0) \leq
h_M(1,0)$, $v \cdot (0,1) \leq h_M(0,1)$, and $v \cdot (1,1) =
h_M(1,1)$. Therefore $(a,b) \in M$ and the result follows from
the fact that $M$ is $1$-unconditional.

Let $n\ge 3$, let $K=\conv\{\pm e_1,\pm e_2\}$, and let $L = [-e_3, e_3]$.  By assumption, $K*L$ is a zonoid, so its support function must satisfy Hlawka's inequality
\begin{equation}\label{eq: Hlawka's inequality}
h_{K*L}(x) + h_{K*L}(y) + h_{K*L}(z) + h_{K*L}(x+y+z) \geq h_{K*L}(x+y) + h_{K*L}(x+z) + h_{K*L}(y+z),
\end{equation}
for all $x,y,z \in \R^n$; see \cite[Theorem~3.4]{GooW93}.

If $w=(w_1,\dots,w_n)\in \R^n$, then $h_K(w) = \max\{|w_1|,|w_2|\}$ and $h_L(w) = |w_3|$.  For $s>0$, let $x = (-1,1,s,0,\dots,0)$, $y = (1,-1,0,\dots,0)$, and $z = (1,1,0,\dots,0)$.  Then
$x+y+z =(1,1,s,0,\dots,0)$, $x+y = (0,0,s,0,\dots,0)$, $x+z = (0,2,s,0,\dots,0)$, and $y+z = (2,0,\dots,0)$.  Substituting into (\ref{eq: Hlawka's inequality}) and using (\ref{nn}), we obtain
$$
h_M(1,s) + h_M(1,0) + h_M(1,0) + h_M(1,s)\geq h_M(0,s) + h_M(2,s) + h_M(2,0).$$
In view of the homogeneity (\ref{homog}) of $h_M$, this reduces to
$$
h_M(2,2s) \geq h_M(0,s) + h_M(2,s).
$$
By the subadditivity (\ref{subadd}) of $h_M$, the reverse of the previous inequality holds, so
\begin{equation}\label{f1}
h_M(2,2s) = h_M(0,s) + h_M(2,s),
\end{equation}
for all $s>0$.  Taking instead $K=[-e_1,e_1]$, $L=\conv\{\pm e_2,\pm e_3\}$, $x = (t,-1,1,0,\dots,0)$, $y = (0,1,-1,0,\dots,0)$, and $z = (0,1,1,0,\dots,0)$, similar computations yield
\begin{equation}\label{f2}
h_M(2t,2) = h_M(t,0) + h_M(t,2),
\end{equation}
for all $t>0$.
Setting $s = 1$ in (\ref{f1}) and $t = 2$ in (\ref{f2}), and using (\ref{homog}) again, we arrive at
$$
2h_M(1,1) = h_M(0,1) + h_M(2,1) \quad \text{and} \quad h_M(2,1) = h_M(1,0) + h_M(1,1).
$$
Substituting for $h_M(2,1)$ from the second of these equations into the first, we obtain (\ref{goal}).
\end{proof}

The following characterization of Minkowski addition follows immediately from Theorem~\ref{characterization Minkowski addition zonoids1} and the definition of the identity property (Property~3 in Section~\ref{properties}).  By Proposition~\ref{FromGHW2}, the first two properties listed can be replaced by projection covariance.

\begin{cor}\label{characterization Minkowski}
If $n\ge 3$, then $*:\left( \cZ^n_s\right)^2\to \cZ^n_s$ is continuous in the Hausdorff metric, $GL(n)$ covariant, and has the identity property if and only if it is Minkowski addition.
\end{cor}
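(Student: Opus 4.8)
The plan is to obtain the forward implication as an immediate consequence of Theorem~\ref{characterization Minkowski addition zonoids1} and then to verify the converse by checking the three listed properties directly against Minkowski addition.

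For the forward direction, suppose $*:\left(\cZ^n_s\right)^2\to\cZ^n_s$ is continuous in the Hausdorff metric, $GL(n)$ covariant, and has the identity property. By Theorem~\ref{characterization Minkowski addition zonoids1} (whose hypotheses are exactly the first two properties), there are $a,b\ge 0$ with $K*L=aK+bL$ for all $K,L\in\cZ^n_s$. Note that $\{o\}$ is an $o$-symmetric zonoid (a degenerate zonotope), so it lies in $\cZ^n_s$ and the identity property is meaningful. Setting $L=\{o\}$ gives $K=K*\{o\}=aK$ for every $K\in\cZ^n_s$; taking $K=[-e_1,e_1]$ (or any nondegenerate zonoid) forces $a=1$. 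Symmetrically, $\{o\}*K=bK=K$ forces $b=1$. Hence $K*L=K+L$, i.e., $*$ is Minkowski addition. (If one prefers, by Proposition~\ref{FromGHW2} the first two properties may be replaced throughout by projection covariance, with no change to the argument.)

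For the converse, I would record the elementary fact that Minkowski addition has all three properties on $\cZ^n_s$. First, $+$ maps $\left(\cZ^n_s\right)^2$ into $\cZ^n_s$, since the Minkowski sum of two zonotopes is a zonotope, Minkowski addition is continuous with respect to Hausdorff convergence, and $o$-symmetry is preserved; so the statement type-checks. From $h_{K+L}=h_K+h_L$ and (\ref{HD}) one gets $\delta(K_m+L_m,M+N)\le\delta(K_m,M)+\delta(L_m,N)$, which yields continuity in the Hausdorff metric. $GL(n)$ covariance, $\phi(K+L)=\phi K+\phi L$, is immediate from the linearity of $\phi$, and $K+\{o\}=K=\{o\}+K$ is the identity property.

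There is no genuine obstacle here: the corollary is a direct corollary of Theorem~\ref{characterization Minkowski addition zonoids1}, and the only points worth a line of comment are that $\{o\}\in\cZ^n_s$, so the identity property is non-vacuous, and that $\cZ^n_s$ is closed under Minkowski addition, so the converse is well posed.
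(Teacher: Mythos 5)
Your proposal is correct and follows the same route the paper intends: the authors state that the corollary ``follows immediately from Theorem~\ref{characterization Minkowski addition zonoids1} and the definition of the identity property,'' which is exactly your argument of obtaining $K*L=aK+bL$ and then using $K*\{o\}=aK=K$ and $\{o\}*K=bK=K$ to force $a=b=1$. Your additional remarks---that $\{o\}\in\cZ^n_s$ so the identity property is non-vacuous, and that $\cZ^n_s$ is closed under Minkowski addition so the converse is well posed---are correct and merely make explicit what the paper leaves implicit.
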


We now provide three examples that show that none of the assumptions on the operation $*$ in Theorem~\ref{characterization Minkowski addition zonoids1} and Corollary~\ref{characterization Minkowski} can be omitted.  Where possible, we produce operations that are also associative.

\begin{ex}\label{MinkEx1}
{\em In \cite[Example~7.15]{GHW}, an operation $*:\left( \cK^n_s\right)^2\to \cK^n_s$ is defined as follows.  Let $F:{\mathcal{K}}^n_s\to {\mathcal{K}}^n_s$ be such that $F(K)$ is the set obtained by rotating $K$ by an angle equal to its volume $V(K)$ around the origin in the $\{x_1,x_2\}$-plane and define
\begin{equation}\label{eqMEx1}
K*L=F^{-1}\left(F(K)+F(L)\right),
\end{equation}
for all $K,L\in {\mathcal{K}}^n_s$.  Clearly $*:\left( \cZ^n_s\right)^2\to\cZ^n_s$.  It is easy to see that $*$ is continuous in the Hausdorff metric and has the identity property (and is also associative) but in \cite[Example~7.15]{GHW} it is shown that it is neither projection covariant nor $GL(n)$ covariant.  When $n\ge 3$, this can also be seen directly from Theorem~\ref{characterization Minkowski addition zonoids1}, as follows.

Let $a,b\ge 0$ and let $K=L=[-1/2,1/2]^n$, so that $V(K)=V(L)=1$ and $aK+bL=[-(a+b)/2,(a+b)/2]^n$. Then $F(K)$ and $F(L)$ are rotations of $[-1/2,1/2]^n$ by an angle of $1$ around the origin in the $\{x_1,x_2\}$-plane, so $F(K)+F(L)$ is a rotation of $[-1,1]^n$ by an angle of $1$ around the origin in the $\{x_1,x_2\}$-plane.  Since $V\left(F(K)+F(L)\right)=2^n$, $K*L$ is a rotation of $[-1,1]^n$ by an angle of $1-2^n$ around the origin in the $\{x_1,x_2\}$-plane.  This shows that $K*L\neq aK+bL$.}
\end{ex}

\begin{ex}\label{MinkEx2}
{\em Define $*$ on $\left( \cZ^n_s\right)^2$ by $K*L=JK+_2JL$, where $JK$ is the John ellipsoid of $K$ taken in $\aff K$ and $+_2$ is defined by (\ref{Lpaddition}) with $p=2$. By definition, the John ellipsoid is the ellipsoid of maximal volume contained in $K$.  The existence of $JK$, and the fact that it is $o$-symmetric whenever $K$ is, is proved in \cite[Theorem~4.2.12]{Gar06}, for example. Since $JK$ and $JL$ are $o$-symmetric, (\ref{Lpaddition}) with $p=2$ shows that $JK+_2JL$ is also $o$-symmetric.  Moreover, the fact that $JK$ and $JL$ are ellipsoids in $\R^n$ ensures that $JK+_2JL$ is also an ellipsoid.  Indeed, Firey \cite{Fir64} proved this for $n$-dimensional ellipsoids.  If $JK$ or $JL$ are not $n$-dimensional, choose sequences of $n$-dimensional ellipsoids $E_m$ and $F_m$ such that $E_m\to JK$ and $F_m\to JL$ as $m\to \infty$ in the Hausdorff metric.  Then
$E_m+_2F_m\to JK+_2JL$ as $m\to\infty$, since $+_2$ is continuous in the Hausdorff metric as an operation between compact convex sets containing the origin.  Then $JK+_2JL$ is the limit in the Hausdorff metric of $n$-dimensional ellipsoids and so must itself be an ellipsoid.  By \cite[Corollary~4.1.6 and Theorem~4.1.11]{Gar06}, every $n$-dimensional ellipsoid is a zonoid and it follows by taking limits in the Hausdorff metric that every ellipsoid is a zonoid.
Consequently, $JK+_2JL$ is a zonoid and hence $*:\left( \cZ^n_s\right)^2\to \cZ^n_s$.

Let $\phi\in GL(n)$.  The formula $\phi(JK)=J(\phi K)$ is noted in \cite[Lemma~2.5]{LYZ05} (taking $p=\infty$ there, which corresponds to the John ellipsoid) when $K\in \cK^n_{os}$.
If $K\in \cK^n_s$ and $\dim K<n$, then this formula is clearly
true for those $\phi$ such that $\phi(\aff K)=\aff K$ and for orthogonal $\phi$. Since a general $\phi\in GL(n)$ is a composition of such maps, the formula holds for all $K\in \cK^n_s$.  Using this and the fact that $+_2$ is $GL(n)$ covariant as an operation between compact convex sets containing the origin, we have
$$\phi(K*L)=\phi(JK+_2JL)=\phi(JK)+_2\phi(JL)=J(\phi K)+J(\phi L)=\phi K*\phi L.$$
This shows that $*$ is $GL(n)$ covariant and by Theorem~\ref{characterization Minkowski addition zonoids1}, when $n\ge 3$ it is therefore neither projection covariant nor continuous in the Hausdorff metric.

The operation $*$ is also associative, because if $K$ is an ellipsoid, then clearly $JK=K$.  Therefore if $K,L,M\in \cZ^n_s$, the associativity of the operation $+_2$ yields
$$K*(L*M)=JK+_2J(JL+_2JM)=JK+_2(JL+_2JM)=(JK+_2JL)+_2JM=(K*L)*M.$$}
\end{ex}

\begin{ex}\label{MinkEx3}
{\em The operation $*:\left( \cZ^n_s\right)^2\to\cZ^n_s$ defined by $K*L=K+2L$ for $K,L\in \cZ^n_s$ is projection covariant (and hence continuous in the Hausdorff metric and $GL(n)$ covariant) but does not satisfy the identity property.}
\end{ex}

\section{A characterization of Blaschke addition}\label{SecBlaschke}

\begin{lem}\label{lemBlaschke}
As an operation $\sharp \colon \left(\cK^n_{os}\right)^2  \to \cK^n_{os}$, Blaschke addition is uniformly continuous in the L\'{e}vy-Prokhorov metric $\delta_{LP}$.
\end{lem}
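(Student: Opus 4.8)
The plan is to reduce everything to a single elementary estimate: that the addition map $(\mu,\nu)\mapsto\mu+\nu$ on finite nonnegative Borel measures is $1$-Lipschitz for the L\'{e}vy-Prokhorov metric $d_{LP}$. Indeed, by the definition (\ref{LPdef}) of $\delta_{LP}$ and the definition (\ref{Bladef}) of Blaschke addition, for all $K,L,K',L'\in\cK^n_{os}$ one has
$$\delta_{LP}(K\,\sharp\,L,K'\,\sharp\,L')=d_{LP}\bigl(S(K,\cdot)+S(L,\cdot),\,S(K',\cdot)+S(L',\cdot)\bigr),$$
so it suffices to bound the right-hand side by $\delta_{LP}(K,K')+\delta_{LP}(L,L')=d_{LP}(S(K,\cdot),S(K',\cdot))+d_{LP}(S(L,\cdot),S(L',\cdot))$.

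The key step is the claim that $d_{LP}(\mu+\nu,\mu'+\nu')\le d_{LP}(\mu,\mu')+d_{LP}(\nu,\nu')$ for all $\mu,\mu',\nu,\nu'\in{\mathcal{M}}_+(S^{n-1})$. To prove it I would fix any $\alpha>d_{LP}(\mu,\mu')$ and $\beta>d_{LP}(\nu,\nu')$ and set $\gamma=\alpha+\beta$. Since the family of admissible $\varepsilon$ in (\ref{Pro}) is upward closed (because $A_{\varepsilon_1}\subseteq A_{\varepsilon_2}$ when $\varepsilon_1<\varepsilon_2$), for every Borel set $A\subseteq S^{n-1}$ we have $\mu(A)\le\mu'(A_\alpha)+\alpha$ and $\nu(A)\le\nu'(A_\beta)+\beta$; using that $\varepsilon\mapsto A_\varepsilon$ is increasing, so that $A_\alpha\subseteq A_\gamma$ and $A_\beta\subseteq A_\gamma$, we get
$$(\mu+\nu)(A)=\mu(A)+\nu(A)\le\mu'(A_\gamma)+\nu'(A_\gamma)+\alpha+\beta=(\mu'+\nu')(A_\gamma)+\gamma,$$
and symmetrically $(\mu'+\nu')(A)\le(\mu+\nu)(A_\gamma)+\gamma$. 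Hence $d_{LP}(\mu+\nu,\mu'+\nu')\le\gamma$, and letting $\alpha\downarrow d_{LP}(\mu,\mu')$ and $\beta\downarrow d_{LP}(\nu,\nu')$ yields the claim.

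Combining the two displays gives $\delta_{LP}(K\,\sharp\,L,K'\,\sharp\,L')\le\delta_{LP}(K,K')+\delta_{LP}(L,L')$ for all $K,L,K',L'\in\cK^n_{os}$, which is uniform continuity (one may take $\delta=\varepsilon/2$ in the $\varepsilon$--$\delta$ definition) and in fact a Lipschitz bound. I do not expect any real obstacle here: the only points needing a line of care are the monotonicity bookkeeping with the inflated sets $A_\varepsilon$, and the fact, already implicit in the statement, that $K\,\sharp\,L\in\cK^n_{os}$ --- which holds because $S(K,\cdot)+S(L,\cdot)$ is even (so the Blaschke sum is $o$-symmetric by Aleksandrov's uniqueness theorem) and dominates $S(K,\cdot)$, hence is not concentrated on any great subsphere, so Minkowski's existence theorem produces a full-dimensional body.
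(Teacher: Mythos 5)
Your proposal is correct and follows essentially the same route as the paper: both reduce the lemma to the elementary estimate that measure addition is Lipschitz for $d_{LP}$, via the same inflation argument $(\mu+\nu)(A)\le(\mu'+\nu')(A_\gamma)+\gamma$. Your version is marginally sharper (you obtain the bound $\delta_{LP}(K,K')+\delta_{LP}(L,L')$ where the paper settles for $2\max\{\delta_{LP}(K,K'),\delta_{LP}(L,L')\}$), but the underlying idea is identical.
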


\begin{proof}
Let $\mu_i,\nu_i\in {\mathcal{M}}_+(S^{n-1})$, $i=1,2$, and suppose that
$$\max\{d_{LP}(\mu_1,\mu_2),d_{LP}(\nu_1,\nu_2)\}<\ee.$$
Then for each Borel set $A$ in $S^{n-1}$, we have
\begin{align*}
(\mu_1 + \nu_1)(A)
&= \mu_1(A) + \nu_1(A) \\
&\leq \mu_2(A^\ee) + \ee + \nu_2(A^\ee) + \ee \\
&= (\mu_2 + \nu_2)(A^\ee) + 2 \ee\leq (\mu_2 + \nu_2)(A^{2 \ee}) + 2 \ee .
\end{align*}
It follows that
\begin{equation}\label{dlp}
d_{LP}(\mu_1+\nu_1,\mu_2+\nu_2 ) \leq 2\max\{d_{LP}(\mu_1,\mu_2),d_{LP}(\nu_1,\nu_2)\}.
\end{equation}
If $K_i,L_i\in \cK^n_{os}$, $i=1,2$, we can apply (\ref{Bladef}), (\ref{dlp}) with $\mu_i=S(K_i,\cdot)$ and $\nu_i=S(L_i,\cdot)$, $i=1,2$, and (\ref{LPdef}) to obtain
\begin{equation}\label{esti}
\delta_{LP}(K_1\,\sharp\, L_1,K_2\,\sharp\, L_2) \leq 2\max\{\delta_{LP}(K_1,K_2),\delta_{LP}(L_1,L_2)\},
\end{equation}
so $\sharp \colon \left(\cK^n_{os}\right)^2  \to \cK^n_{os}$ is uniformly continuous with respect to $\delta_{LP}$.
\end{proof}

\begin{thm}\label{thmBlaschke}
If $n\ge 3$, then $*:\left( \cK^n_{os}\right)^2\to \cK^n_{os}$ is uniformly continuous in the L\'{e}vy-Prokhorov metric $\delta_{LP}$ and $GL(n)$ covariant if and only if for all $K,L\in \cK^n_{os}$, we have either $K*L=aK$, for some $a>0$, or $K*L=bL$, for some $b>0$, or $K*L=aK\,\sharp\,bL$, for some $a,b>0$.
\end{thm}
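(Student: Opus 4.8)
The plan is to push the operation $*$ forward through the projection body operator to an operation on $o$-symmetric zonoids, extend it to all of $\cZ^n_s$, apply Theorem~\ref{characterization Minkowski addition zonoids1}, and pull the conclusion back. I will dispose of the (easy) converse direction first: the maps $(K,L)\mapsto aK$ and $(K,L)\mapsto bL$ are $GL(n)$ covariant because $\phi(aK)=a\phi K$, and uniformly continuous in $\delta_{LP}$ because $d_{LP}(c\mu,c\nu)\le\max\{c,1\}\,d_{LP}(\mu,\nu)$, which is immediate from (\ref{Pro}); the map $(K,L)\mapsto aK\,\sharp\,bL$ is $GL(n)$ covariant by Proposition~\ref{BGLncov} applied to $aK$ and $bL$, and uniformly continuous in $\delta_{LP}$ by the argument of Lemma~\ref{lemBlaschke}, using $S(aK\,\sharp\,bL,\cdot)=a^{n-1}S(K,\cdot)+b^{n-1}S(L,\cdot)$, (\ref{dlp}), and the estimate for $d_{LP}(c\mu,c\nu)$ just mentioned.

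For the forward direction, I would start from the bijection $\Pi\colon\cK^n_{os}\to\cZ^n_{os}$ noted after (\ref{zonmeas}) and define $\star$ on $(\cZ^n_{os})^2$ by $Z_1\star Z_2=\Pi(\Pi^{-1}Z_1*\Pi^{-1}Z_2)$. Since $\Pi$ is an isometry between $(\cK^n_{os},\delta_{LP})$ and $(\cZ^n_{os},\overline{\delta}_{LP})$ by Proposition~\ref{projection body operator isometry}, $\star$ inherits uniform continuity in $\overline{\delta}_{LP}$ from that of $*$ in $\delta_{LP}$. To see that $\star$ is $GL(n)$ covariant, given $\phi\in GL(n)$ put $\psi=|\det\phi|^{1/(n-1)}\phi^{-t}\in GL(n)$; then (\ref{Pimt}) gives $\Pi^{-1}(\phi Z_i)=\psi\,\Pi^{-1}Z_i$, the $GL(n)$ covariance of $*$ gives $\psi\Pi^{-1}Z_1*\psi\Pi^{-1}Z_2=\psi(\Pi^{-1}Z_1*\Pi^{-1}Z_2)$, and applying (\ref{Pit}) to $\psi$ returns $\phi(Z_1\star Z_2)$.

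The heart of the argument is to extend $\star$ to $(\cZ^n_s)^2$. First, $\cZ^n_{os}$ is $\overline{\delta}_{LP}$-dense in $\cZ^n_s$: given $Z\in\cZ^n_s$, the zonoids $Z+\frac{1}{m}B^n$ lie in $\cZ^n_{os}$ and converge to $Z$ in the Hausdorff metric, hence in $\overline{\delta}_{LP}$ by Lemma~\ref{zonoids Hausdorff equivalent weak convergence}. Second, $(\cZ^n_s,\overline{\delta}_{LP})$ is complete: a Cauchy sequence of generating measures has bounded total mass, so a subsequence converges weakly to an even finite measure, which by (\ref{zonmeas}) generates an $o$-symmetric zonoid, and the whole sequence converges to it. Being uniformly continuous on the dense set $(\cZ^n_{os})^2$, the operation $\star$ therefore extends uniquely to a uniformly continuous operation $\overline{\star}\colon(\cZ^n_s)^2\to\cZ^n_s$, which is continuous in the Hausdorff metric by Lemma~\ref{zonoids Hausdorff equivalent weak convergence} and $GL(n)$ covariant by passing to the limit (each $\phi\in GL(n)$ is Hausdorff-continuous and maps $\cZ^n_{os}$ into itself). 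Theorem~\ref{characterization Minkowski addition zonoids1} then yields $a,b\ge0$ with $\overline{\star}(Z_1,Z_2)=aZ_1+bZ_2$ for all $Z_1,Z_2\in\cZ^n_s$, in particular on $\cZ^n_{os}$. To pull this back, take $\phi=cI$ in (\ref{Pit}) to get $\Pi(cM)=c^{n-1}\Pi M$ for $c>0$; then when $a,b>0$ we have $a\,\Pi K+b\,\Pi L=\Pi(a^{1/(n-1)}K)+\Pi(b^{1/(n-1)}L)=\Pi(a^{1/(n-1)}K\,\sharp\,b^{1/(n-1)}L)$ by (\ref{pik}), so $K*L=a^{1/(n-1)}K\,\sharp\,b^{1/(n-1)}L$; if exactly one of $a,b$ vanishes this degenerates to $a^{1/(n-1)}K$ or $b^{1/(n-1)}L$, and $a=b=0$ is impossible because then $K*L=\Pi^{-1}(\{o\})$ would not be a convex body, contradicting that $*$ has range in $\cK^n_{os}$.

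The hard part is the extension step in the third paragraph. On zonoids the Hausdorff and L\'{e}vy--Prokhorov metrics are topologically equivalent (Lemma~\ref{zonoids Hausdorff equivalent weak convergence}) but not uniformly equivalent, so only uniform continuity of $*$---not continuity---propagates through the isometry $\Pi$ to give uniform continuity of $\star$ in $\overline{\delta}_{LP}$, and it is then completeness of $(\cZ^n_s,\overline{\delta}_{LP})$ that makes the extension possible; some care is also needed to confirm that $\overline{\star}$ still takes values in $\cZ^n_s$ and remains $GL(n)$ covariant after passing to the limit. This is exactly the structural reason that uniform continuity cannot be weakened to continuity in the theorem.
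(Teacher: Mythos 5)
Your proposal is correct and follows essentially the same route as the paper: transfer $*$ to an operation on $\cZ^n_{os}$ via the projection body bijection, use the isometry of Proposition~\ref{projection body operator isometry} to carry uniform continuity over, extend to $(\cZ^n_s)^2$ by density and completeness of $(\cZ^n_s,\overline{\delta}_{LP})$, invoke Theorem~\ref{characterization Minkowski addition zonoids1}, and pull back with the scaling $\Pi(cM)=c^{n-1}\Pi M$. The only (harmless) differences are that you spell out the degenerate cases $aK$ and $bL$ in the converse and give a direct compactness argument for completeness where the paper cites Billingsley.
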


\begin{proof}
Let $n\ge 3$ and let $*:\left( \cK^n_{os}\right)^2\to \cK^n_{os}$ have the properties stated in the theorem.  We define a new operation $\diamond \colon \left(\cZ^n_{os}\right)^2 \to \cZ^n_{os}$ by
\begin{equation}\label{didef}
K \diamond L = \Pi \left( \Pi^{-1} K *\Pi^{-1} L \right),
\end{equation}
for all $K,L \in \cZ^n_{os}$.  We shall prove that $\diamond$ also has the properties stated in the theorem.

Let $\ee>0$. The uniform continuity of $*$ implies that there exists a $\delta>0$ such that
\begin{equation}\label{eqee}
\delta_{LP}\left( K_1*L_1, K_2*L_2 \right) < \ee,
\end{equation}
for all $K_i,L_i \in \cK^n_{os}$, $i=1,2$, such that $\delta_{LP}(K_1,K_2)< \delta$ and $\delta_{LP}(L_1,L_2)<\delta$.  Let $Y_i,Z_i\in \cZ^n_{os}$ satisfy
$\overline{\delta}_{LP}(Y_1,Y_2) < \delta$ and $\overline{\delta}_{LP}(Z_1,Z_2) < \delta$.
Then $\Pi^{-1}Y_i,\Pi^{-1}Z_i\in \cK^n_{os}$, $i=1,2$, and by Proposition~\ref{projection body operator isometry},
$$\delta_{LP}\left( \Pi^{-1}Y_1, \Pi^{-1}Y_2 \right) = \overline{\delta}_{LP}(Y_1,Y_2) < \delta~~{\text{and}}~~\delta_{LP}\left( \Pi^{-1}Z_1, \Pi^{-1}Z_2 \right) = \overline{\delta}_{LP}(Z_1,Z_2) < \delta.$$
Using (\ref{didef}), Proposition~\ref{projection body operator isometry} again, and (\ref{eqee}) with $K_i=\Pi^{-1}Y_i$ and $L_i=\Pi^{-1}Z_i$, $i=1,2$, we obtain
\begin{align*}
\overline{\delta}_{LP}(Y_1 \diamond Z_1, Y_2 \diamond Z_2)
&=\overline{\delta}_{LP}\left( \Pi\left(\Pi^{-1}Y_1 *\Pi^{-1}Z_1\right), \Pi\left(\Pi^{-1}Y_2*\Pi^{-1}Z_2\right) \right)\\
&= \delta_{LP}\left( \Pi^{-1}Y_1 *\Pi^{-1}Z_1, \Pi^{-1}Y_2*\Pi^{-1}Z_2 \right)< \ee.
\end{align*}
Therefore $\diamond$ is uniformly continuous in the L\'{e}vy-Prokhorov metric $\overline{\delta}_{LP}$.

Let $K,L \in \cZ^n_{os}$ and let $\phi \in GL(n)$.  Using (\ref{didef}), (\ref{Pimt}), the $GL(n)$ covariance of $*$, and (\ref{Pit}), we obtain
\begin{align*}
\phi K \diamond \phi L
&= \Pi \left( \Pi^{-1} (\phi K)*\Pi^{-1} (\phi L) \right) \\
&= \Pi \left( \left(|\det\phi|^{1/(n-1)}\phi^{-t} \left(\Pi^{-1} K\right) \right)*\left( |\det\phi|^{1/(n-1)}\phi^{-t} \left(\Pi^{-1} L\right) \right) \right) \\
&= \Pi \left( |\det\phi|^{1/(n-1)}\phi^{-t} \left( \Pi^{-1} K*\Pi^{-1} L \right) \right) \\
&= \phi\left( \Pi \left( \Pi^{-1} K*\Pi^{-1} L \right)\right)=\phi (K \diamond L),
\end{align*}
proving that $\diamond$ is $GL(n)$-covariant.

The set $\cZ^n_{os}$ is dense in $\left(\cZ^n_s,\delta\right)$, so
by Lemma~\ref{zonoids Hausdorff equivalent weak convergence}, it is also dense in $\left(\cZ^n_s, \overline{\delta}_{LP}\right)$.  We claim that $\left(\cZ^n_s, \overline{\delta}_{LP}\right)$ is a complete metric space.  Indeed, $\left({\mathcal{M}}_+(S^{n-1}),d_{LP}\right)$ is complete, because $S^{n-1}$ is separable and complete (see \cite[Theorem~6.8]{Bil99}). Using the fact that a measure is even if and only if the integral of every continuous odd function with respect to the measure is zero, it is easy to see that the set of even measures in $\left({\mathcal{M}}_+(S^{n-1}),d_{LP}\right)$ is closed and therefore also complete.  Since the set of generating measures of zonoids is precisely this set of even measures, it too is complete.  It then follows from (\ref{LPdefZ}) that $\left(\cZ^n_s, \overline{\delta}_{LP}\right)$ is complete, proving the claim.

Since $\diamond$ is uniformly continuous with respect to $\overline{\delta}_{LP}$, the properties of $\cZ^n_{os}$ and $\cZ^n_s$ established in the previous paragraph ensure that it has a unique extension to a map $\diamond\colon \left(\cZ^n_s\right)^2\to\cZ^n_s$ that is uniformly continuous with respect to $\overline{\delta}_{LP}$; see, for example, \cite[Theorem~D, p.~78]{Sim03}. By Lemma~\ref{zonoids Hausdorff equivalent weak convergence} again, this extension is also continuous in the Hausdorff metric.

Using the continuity of $\diamond\colon \left(\cZ^n_s\right)^2\to\cZ^n_s$ in the Hausdorff metric, it is easy to check that the $GL(n)$ covariance of $\diamond$, proved above for $\cZ^n_{os}$, also holds for $\cZ^n_s$.  It now follows from Theorem~\ref{characterization Minkowski addition zonoids1}
that $K\diamond L=cK+dL$, for some $c,d\ge 0$ and all $K,L\in {\mathcal{Z}}^n_s$.

Let $a=c^{1/(n-1)}$ and $b=d^{1/(n-1)}$ and let $K,L\in \cK^n_{os}$.  Then $a,b\ge 0$, and $Y=\Pi K$ and $Z=\Pi L$ belong to $\cZ^n_{os}$.  From (\ref{Pit}) it is easy to see that $\Pi(rK)=r^{n-1}\Pi K$, for all $r\ge 0$. Using this and (\ref{didef}) with $K$ and $L$ replaced by $Y$ and $Z$, respectively, we obtain, for $c,d>0$ (and hence $a,b>0$),
\begin{align*}
\Pi(aK\,\sharp\, bL)&=\Pi(aK)+\Pi(bL)=a^{n-1}\Pi K+b^{n-1}\Pi L\\
&=
cY+dZ=Y\diamond Z=\Pi \left( \Pi^{-1}Y *\Pi^{-1}Z \right)=\Pi(K*L).
\end{align*}
Since $aK\,\sharp\, bL$ and $K*L$ are $o$-symmetric, we conclude that $K*L=aK\,\sharp\, bL$.  If $c>0$ and $d=0$, we have $a>0$ and $Y\diamond Z=cY$.  Therefore
$$\Pi(aK)=a^{n-1}\Pi K=cY=Y\diamond Z=\Pi \left( \Pi^{-1}Y *\Pi^{-1}Z \right)=\Pi(K*L),$$
from which we obtain $K*L=aK$.  The case when $a=0$ and $b>0$ is similar.  If $c=d=0$, then
$$\{o\}=cY+dZ=Y\diamond Z=\Pi \left( \Pi^{-1}Y *\Pi^{-1}Z \right)=\Pi(K*L),$$
which is impossible because $K*L\in \cK^n_{os}$ by assumption.

Finally, $\sharp:\left(\cK^n_{os}\right)^2\to \cK^n_{os}$ is $GL(n)$ covariant by Proposition~\ref{BGLncov} and Lemma~\ref{lemBlaschke} shows that it is uniformly continuous in the L\'{e}vy-Prokhorov metric $\delta_{LP}$.  Therefore the map $*:\left( \cK^n_{os}\right)^2\to \cK^n_{os}$ defined by $K*L=aK\,\sharp\,bL$, for some $a,b> 0$, also has these properties.
\end{proof}

The following characterization of Blaschke addition follows immediately from Theorem~\ref{thmBlaschke} and the definition of the limit identity property (Property~3$'$ in Section~\ref{properties}), which we assume is taken with respect to the L\'{e}vy-Prokhorov metric $\delta_{LP}$ (or equivalently, by Proposition~\ref{SecondLPHaus}, with respect to the Hausdorff metric).

\begin{cor}\label{characterization Blaschke}
If $n\ge 3$, then $*:\left( \cK^n_{os}\right)^2\to \cK^n_{os}$ is uniformly continuous in the L\'{e}vy-Prokhorov metric $\delta_{LP}$, $GL(n)$ covariant, and has the limit identity property,  if and only if it is Blaschke addition.
\end{cor}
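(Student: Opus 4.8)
The plan is to read the corollary off from Theorem~\ref{thmBlaschke}, using the limit identity property to discard the two one-sided alternatives and to force $a=b=1$ in the remaining one. For the sufficiency direction I would verify that Blaschke addition $\sharp\colon\left(\cK^n_{os}\right)^2\to\cK^n_{os}$ has all three listed properties: uniform continuity in $\delta_{LP}$ is exactly Lemma~\ref{lemBlaschke}, and $GL(n)$ covariance is Proposition~\ref{BGLncov}. For the limit identity property, (\ref{Bladef}) gives $S\left(K\,\sharp\,(sB^n),\cdot\right)=S(K,\cdot)+s^{n-1}S(B^n,\cdot)$, which converges weakly to $S(K,\cdot)$ as $s\to 0$; since $K$ has its centroid at the origin, the weak continuity of surface area measures together with Aleksandrov's uniqueness theorem yields $K\,\sharp\,(sB^n)\to K$ in the Hausdorff metric, hence in $\delta_{LP}$ by Proposition~\ref{SecondLPHaus}, and commutativity of $\sharp$ takes care of the other one-sided limit.

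For the necessity direction, assume $*$ is uniformly continuous in $\delta_{LP}$, $GL(n)$ covariant, and has the limit identity property. By Theorem~\ref{thmBlaschke} one of the following holds: (i) $K*L=aK$ for all $K,L\in\cK^n_{os}$ and some $a>0$; (ii) $K*L=bL$ for all $K,L$ and some $b>0$; or (iii) $K*L=aK\,\sharp\,bL$ for all $K,L$ and some $a,b>0$. In case (i), $(sB^n)*K=(as)B^n$ has surface area measure $(as)^{n-1}S(B^n,\cdot)$, which tends to the zero measure as $s\to 0$, so $(sB^n)*K$ cannot converge to $K\in\cK^n_{os}$; this contradicts the limit identity property. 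Case (ii) is excluded by the same argument applied to $K*(sB^n)$. Hence case (iii) holds. Then $K*(sB^n)=aK\,\sharp\,(bs)B^n$ has surface area measure $a^{n-1}S(K,\cdot)+(bs)^{n-1}S(B^n,\cdot)$, which converges weakly to $S(aK,\cdot)$ as $s\to 0$, so, as in the sufficiency argument, $K*(sB^n)\to aK$ in $\delta_{LP}$; the limit identity property forces $aK=K$, and comparing volumes gives $a=1$. Symmetrically, considering $(sB^n)*K$ gives $b=1$. Therefore $K*L=K\,\sharp\,L$ for all $K,L\in\cK^n_{os}$, i.e., $*$ is Blaschke addition.

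Since the corollary is advertised in the text as immediate, I do not expect a genuine obstacle; the only points requiring care are the degenerate limits. Concretely, one must be sure that the vanishing of the surface area measure of $(sB^n)$ as $s\to 0$ really does exclude cases (i) and (ii) (the limit there cannot be a body with nonempty interior), and that the surface area measure of $aK\,\sharp\,(bs)B^n$ genuinely converges to that of $aK$, so that the limit identity property can pin $a$ and $b$ to $1$.
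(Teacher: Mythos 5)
Your proposal is correct and follows essentially the same route as the paper, which derives the corollary directly from Theorem~\ref{thmBlaschke} together with the limit identity property (and notes separately, as you do, that Blaschke addition itself has that property via the weak convergence $S(K,\cdot)+s^{n-1}S(B^n,\cdot)\to S(K,\cdot)$). The details you supply --- excluding the one-sided cases because $a(sB^n)\to\{o\}\neq K$, and pinning $a=b=1$ from $aK=K$ by comparing volumes --- are exactly the routine verifications the paper leaves to the reader.
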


We now provide examples that show that none of the assumptions on the operation $*$ in Theorem~\ref{thmBlaschke} and Corollary~\ref{characterization Blaschke} can be omitted and moreover that uniform continuity cannot be replaced by continuity.  Where possible, we exhibit operations that are also associative.

\begin{ex}\label{BlasEx1}
{\em Minkowski addition $+:\left( \cK^n_{os}\right)^2\to \cK^n_{os}$ is $GL(n)$ covariant and therefore, by Theorem~\ref{thmBlaschke}, not uniformly continuous in the L\'{e}vy-Prokhorov metric ${\delta}_{LP}$ when $n\ge 3$.  It also has the limit identity property because as an operation $+:\left( \cK^n_{s}\right)^2\to \cK^n_{s}$, it has the identity property and is continuous in the Hausdorff metric.   Since $+:\left( \cK^n_{os}\right)^2\to \cK^n_{os}$ is continuous in the Hausdorff metric, Proposition~\ref{SecondLPHaus} implies that it is also continuous in the L\'{e}vy-Prokhorov metric ${\delta}_{LP}$.}
\end{ex}

Another operation with the same properties as those of Minkowski addition given in Example~\ref{BlasEx1} was introduced by Firey \cite{Fir61}.  In \cite[Section~5.5]{GHW}, it is called polar $L_p$ addition and is defined for $1\le p\le \infty$ and $K,L\in {\mathcal K}^n_{o}$ by $\left(K^{\circ}+_{p} L^{\circ}\right)^{\circ}$.

The following example is inspired by \cite[Example~7.15]{GHW} (see Example~\ref{MinkEx1}).

\begin{ex}\label{BlasEx2}
{\em Let $n\ge 2$ and let $F:{\mathcal{K}}^n_{os}\to {\mathcal{K}}^n_{os}$ be such that $F(K)$ is the set obtained by rotating $K$ by an angle equal to its surface area $S(K)$ around the origin in the $\{x_1,x_2\}$-plane.  Note that since $S\left(F(K)\right)=S(K)$, the map $F$ is injective and so $F^{-1}$ is defined.  Of course, $F^{-1}$ rotates by an angle $-S(K)$ instead. Now define
\begin{equation}\label{eqBEx3}
K*L=F^{-1}\left(F(K)\,\sharp\,F(L)\right),
\end{equation}
for all $K,L\in {\mathcal{K}}^n_{os}$.  It is easy to check that $*$ is associative.

We claim that $*$ is uniformly continuous in the L\'{e}vy-Prokhorov metric ${\delta}_{LP}$.  To see this, suppose that $K,L\in {\mathcal{K}}^n_{os}$ are such that
\begin{equation}\label{e1}
\delta_{LP}(K,L)=d_{LP}\left(S(K,\cdot),S(L,\cdot)\right)=\delta>0.
\end{equation}
By (\ref{Pro}) with $A=S^{n-1}$, this implies that
\begin{equation}\label{e2}
|S(K)-S(L)|=|S(K,S^{n-1})-S(L,S^{n-1})|<2\delta.
\end{equation}
Let $F(K)=\theta_KK$ and $F_L=\theta_LL$, where $\theta_K$ and $\theta_L$ are the appropriate rotations of $K$ and $L$, respectively.  Let $A$ be a Borel subset of $S^{n-1}$.  From (\ref{e2}) and the definition of $F$ we conclude that the angles of rotation of $\theta_K$ and $\theta_L$ differ by less than $2\delta$, so $\theta_K^{-1}A\subset \left(\theta_L^{-1}A\right)_{2\delta}$.  Using this and (\ref{e1}), we obtain
\begin{align*}
S(\theta_KK, A)&=S(K,\theta_K^{-1}A)\\
&<
S\left(L,\left(\theta_K^{-1}A\right)_{2\delta}\right)+2\delta\\
&<
S\left(L,\left(\theta_L^{-1}A\right)_{4\delta}\right)+4\delta\\
&=
S\left(L,\theta_L^{-1}A_{4\delta}\right)+4\delta=
S\left(\theta_LL,A_{4\delta}\right)+4\delta.
\end{align*}
Since $K$ and $L$ can be interchanged in this argument, this proves that
\begin{equation}\label{e3}
\delta_{LP}\left(F(K),F(L)\right)
=d_{LP}\left(S\left(F(K),\cdot\right),S\left(F(L),\cdot\right)\right)
\le 4\delta=4\delta_{LP}(K,L).
\end{equation}

Now let $\ee>0$ and suppose that $K_i,L_i\in \cK^n_{os}$, $i=1,2$, satisfy
\begin{equation}\label{e4}
\delta_{LP}(K_1,K_2)<\ee/32~~\quad{\text{and}}~~\quad\delta_{LP}(L_1,L_2)<\ee/32.
\end{equation}
Using (\ref{e3}) with $F$ replaced by $F^{-1}$, (\ref{esti}), (\ref{e3}) again, and (\ref{e4}), we obtain
\begin{align*}
\delta_{LP}(K_1*L_1,K_2*L_2)&=
\delta_{LP}\left(F^{-1}\left(
F(K_1)\,\sharp\, F(L_1)\right),F^{-1}\left(F(K_2)\,\sharp\, F(L_2)\right)\right)\\
&\le 4\delta_{LP}\left(F(K_1)\,\sharp\, F(L_1),F(K_2)\,\sharp\, F(L_2)\right)\\
&\leq 8\max\left\{\delta_{LP}\left(F(K_1),F(K_2)\right),\delta_{LP}\left(
F(L_1),F(L_2)\right)\right\}\\
&\leq 32\max\left\{\delta_{LP}\left(K_1,K_2\right),\delta_{LP}\left(
L_1,L_2\right)\right\}<\ee.
\end{align*}
This proves the claim.

Next, we claim that when $n\ge 3$, the operation $*$ is not $GL(n)$ covariant.  To this end, let $a,b\ge 0$ and let $K=L=[-1/2,1/2]^n$, so that $S(K)=S(L)=2n$ and
$$aK\,\sharp\,bL= \left[-\frac{\left(a^{n-1}+b^{n-1}\right)^{1/(n-1)}}{2}
,\frac{\left(a^{n-1}+b^{n-1}\right)^{1/(n-1)}}{2}\right]^n.$$
Then $F(K)$ and $F(L)$ are rotations of $[-1/2,1/2]^n$ by an angle of $2n$ around the origin in the $\{x_1,x_2\}$-plane, so $F(K)\,\sharp\,F(L)$ is a rotation of $\left[-2^{-1+1/(n-1)},2^{-1+1/(n-1)}\right]^n$ by an angle of $2n$ around the origin in the $\{x_1,x_2\}$-plane.  Since $S\left(F(K)\,\sharp\,F(L)\right)=4n$, $K*L$ is a rotation of $\left[-2^{-1+1/(n-1)},2^{-1+1/(n-1)}\right]^n$ by an angle of $-2n$ around the origin in the $\{x_1,x_2\}$-plane.  This shows that $K*L\neq aK\,\sharp\,bL$. In view of Theorem~\ref{thmBlaschke}, this proves the claim.

Finally, $*$ has the limit identity property with respect to the L\'{e}vy-Prokhorov metric $\delta_{LP}$.  To prove this, let $s>0$ and $K\in {\mathcal{K}}^n_{os}$. Then $F(sB^n)=sB^n$ and hence by (\ref{eqBEx3}),
$$K*(sB^n)=F^{-1}\left(F(K)\,\sharp\,(sB^n)\right).$$
Since $F(K)$ is a rotation of $K$ by an angle of $S(K)$ around the origin in the $\{x_1,x_2\}$-plane, $F(K)\,\sharp \, (sB^n)$ is a rotation of $K\,\sharp \, (sB^n)$ by an angle of $S(K)$ around the origin in the $\{x_1,x_2\}$-plane.  (Here we used the rotation covariance of Blaschke addition and the rotation invariance of $sB^n$.)
Moreover,
\begin{equation}\label{exsurf2}
S\left(K\,\sharp\,(sB^n),\cdot\right)=S(K,\cdot)
+S\left(sB^n,\cdot\right).
\end{equation}
In particular, we have
$$
S\left(K\,\sharp\,(sB^n)\right)=S(K)
+S\left(sB^n\right)=S(K)+n\kappa_ns^{n-1},
$$
so $K*(sB^n)$ is a rotation of $K\,\sharp \, (sB^n)$ by an angle of $-n\kappa_ns^{n-1}$ around the origin in the $\{x_1,x_2\}$-plane. Hence $\delta\left(K*(sB^n), K\,\sharp \, (sB^n)\right)\to 0$ as $s\to 0$ and from Proposition~\ref{SecondLPHaus}, we conclude that
$\delta_{LP}\left(K*(sB^n), K\,\sharp \, (sB^n)\right)\to 0$ as $s\to 0$.  Also, from (\ref{exsurf2}), we obtain $\delta_{LP}\left(K\,\sharp \, (sB^n), K\right)\to 0$ as $s\to 0$.  It follows that $\delta_{LP}\left(K*(sB^n), K\right)\to 0$ as $s\to 0$. As the operation $*$ is commutative, this proves that $*$ has the limit identity property with respect to the L\'{e}vy-Prokhorov metric $\delta_{LP}$.
}
\end{ex}

\begin{ex}\label{BlasEx3}
{\em The operation $*:\left(\cK^n_{os}\right)^2\to \cK^n_{os}$, $n\ge 3$, defined by $K*L=K\,\sharp\,2L$ for $K,L\in \cK^n_{os}$ is uniformly continuous in the L\'{e}vy-Prokhorov metric ${\delta}_{LP}$ and $GL(n)$ covariant, by Theorem~\ref{thmBlaschke}, but clearly does not have the limit identity property.}
\end{ex}

\bigskip

\end{document}